    \newtheorem{theorem}{Theorem}
    \newtheorem{lemma}{Lemma}
     \newtheorem{assumption}{Assumption}
    \def \R {\mathbb{R}}
    \def \x {\mathbf{x}}
    \def \E {\mathrm{E}}
    \def \x {\mathbf{x}}
    \def \a {\mathbf{a}}
    \def \z {\mathbf{z}}
    \def \y {\mathbf{y}}
    \def \I {\mathbb{I}}
    \def \xh {\widehat{\x}}
    \def \Lh {\hat{L}}
    \def \Dt {\widetilde{D}}
\icmltitlerunning{Katalyst for  Non-Convex Problems}
\begin{document}

\twocolumn[
\icmltitle{Katalyst: Boosting Convex Katayusha\\ for  Non-Convex Problems with a  Large Condition Number}





\icmlsetsymbol{equal}{*}
\begin{icmlauthorlist}
\icmlauthor{Zaiyi Chen}{cainiao,ustc}
\icmlauthor{Yi Xu}{uiowa}
\icmlauthor{Haoyuan Hu}{cainiao}
\icmlauthor{Tianbao Yang}{uiowa}
\end{icmlauthorlist}

\icmlaffiliation{cainiao}{Cainiao AI, China}
\icmlaffiliation{uiowa}{University of Iowa, USA}
\icmlaffiliation{ustc}{University of Science and Technology of China, China}
\icmlcorrespondingauthor{}{zaiyi.czy@alibaba-inc.com}
\icmlkeywords{Non-convex Optimization, Variance Reduction}

\vskip 0.3in
]

\printAffiliationsAndNotice

\begin{abstract}
An important class of non-convex objectives that has wide applications in machine learning consists of  a sum of $n$ smooth functions and a non-smooth convex function. Tremendous studies have been devoted to conquering these problems by leveraging one of the two types of variance reduction techniques, i.e., SVRG-type that computes a full gradient occasionally and SAGA-type that maintains $n$ stochastic gradients at every iteration.  In practice, SVRG-type is preferred to SAGA-type due to its potentially less memory costs. 
An interesting question that has been largely ignored is how to improve the complexity of variance reduction methods for problems with a large condition number that measures the degree to which the objective is close to a convex function. 
In this paper, we present a simple but non-trivial boosting of a  state-of-the-art SVRG-type method for convex problems (namely Katyusha) to enjoy an improved complexity for solving non-convex problems with a large condition number (that is close to a convex function). To the best of our knowledge, its complexity has the best dependence on $n$ and the degree of non-convexity, and also matches that of a recent SAGA-type accelerated stochastic algorithm for a constrained non-convex smooth optimization problem.  Numerical experiments verify the effectiveness of the proposed algorithm in comparison with its competitors.  

\end{abstract}

 \section{Introduction}
The problem of interest in this paper belongs to the following class of non-convex optimization problems: 
\begin{align}\label{eqn:P}
\min_{\x\in\R^d} \phi(\x):= \frac{1}{n}\sum_{i=1}^nf_i(\x) + \psi(\x),
\end{align}
where each $f_i$ is a $L$-smooth  function, and $\psi(\x)$ is a ``simple" closed convex function whose proximal mapping can be efficiently computed.  The above problem covers constrained and non-constrained smooth optimization as special cases when $\psi(\x)$ is the indicator function of a convex set and $\psi=0$. 
This problem has broad applications in machine learning, and has been  studied by numerous  papers~\citep{Reddi:2016:SVR:3045390.3045425,DBLP:conf/cdc/ReddiSPS16,reddi2016proximal,DBLP:journals/corr/abs/1805.05411,natasha1a,DBLP:conf/icml/ZhuH16}.  A  number of stochastic algorithms were proposed by utilizing the finite-sum structure of the problem and smoothness of $f_i$ to derive faster convergence than stochastic gradient methods. These algorithms are  based on two well-known variance-reduction techniques, namely the SVRG-type variance reduction~\citep{NIPS2013_4937} and the SAGA-type variance reduction~\citep{DBLP:conf/nips/DefazioBL14,DBLP:conf/nips/RouxSB12}. The key difference between these two variance reduction techniques is that SVRG uses a full gradient that is computed periodically and SAGA uses a full gradient that is computed from its maintained historical gradients for each component $f_i$. Due to this difference, SAGA might require much higher memory than SVRG for many problems, which renders algorithms of SVRG-type more favorable than  algorithms of SAGA-type. 

\begin{table*}[t]
		\caption{Comparison of gradient complexities of variance reduction based algorithms for finding $\epsilon$-stationary point of~(\ref{eqn:P}) with $\psi=0$. The best complexity result for each setting is marked in red color.  The top two algorithms, namely SAGA and RapGrad use the SAGA-type variance reduction technique, while others use the SVRG-type variance reduction technique. $\widetilde O(\cdot)$ hides some logarithmic factor. $^*$ marks the result is only valid when $L/\mu\leq\sqrt{n}$. 
		 }
		\centering
		\label{tab:2}
		\scalebox{1}{\begin{tabular}{l||l|ll}
			\toprule
			Algorithms & $L/\mu\geq \Omega(n)$ &$L/\mu\leq O(n)$&Non-smooth $\psi$\\
			\midrule
			SAGA~\citep{reddi2016proximal}&$O(n^{2/3}L/\epsilon^2)$&$O(n^{2/3}L/\epsilon^2)$&Yes\\ 
			RapGrad~\citep{DBLP:journals/corr/abs/1805.05411} &${\color{red}\widetilde O(\sqrt{nL\mu}/\epsilon^2)}$&$\widetilde O((\mu n + \sqrt{nL\mu})/\epsilon^2)$&indicator function\\ 
			\midrule
			SVRG~\citep{reddi2016proximal} &$O(n^{2/3}L/\epsilon^2)$&$O(n^{2/3}L/\epsilon^2)$&Yes\\ 
			Natasha1~\citep{DBLP:conf/icml/Allen-Zhu17}&NA&${O( n^{2/3}L^{1/3}\mu^{2/3}/\epsilon^2)}^*$&Yes\\ 
            RepeatSVRG~\citep{DBLP:conf/icml/Allen-Zhu17}&$\widetilde O(n^{3/4}\sqrt{L\mu}/\epsilon^2)$&$\widetilde O((\mu n + n^{3/4}\sqrt{L\mu})/\epsilon^2)$&Yes\\
                    4WD-Catalyst~\citep{pmlr-v84-paquette18a}&$O(n L/\epsilon^2)$&$O(nL/\epsilon^2)$&Yes\\
                   SPIDER~\citep{DBLP:conf/nips/FangLLZ18}&$O(\sqrt{n}L/\epsilon^2)$&$O(\sqrt{n}L/\epsilon^2)$&No\\
                   SNVRG~\citep{DBLP:conf/nips/ZhouXG18}&$O(\sqrt{n}L/\epsilon^2)$&$O(\sqrt{n}L/\epsilon^2)$&No\\
           Katalyst  (this work)&${\color{red}\widetilde O(\sqrt{n L\mu}/\epsilon^2)}$&$ \widetilde O((\mu n + L)/\epsilon^2)$&Yes\\
			\bottomrule
		\end{tabular}}
			\vspace*{-0.2in}
	\end{table*}
Since the proposal of non-convex SVRG for solving non-convex problems in the form of~(\ref{eqn:P}) or its special case with $\psi=0$~\citep{Reddi:2016:SVR:3045390.3045425,DBLP:conf/icml/ZhuH16}, several studies have tried to improve its complexity in terms of the number of components $n$~\citep{DBLP:conf/nips/FangLLZ18,DBLP:conf/nips/ZhouXG18}.  To the best of our knowledge, the state-of-the-art gradient complexity~\footnote{the number of stochastic gradient computations} of SVRG-type methods for finding a solution $\x$ such that $\E[\|\nabla \phi(\x)\|]\leq \epsilon$ under the condition  $\psi=0$ and $\epsilon\leq 1/\sqrt{n}$ is given by $O(L\sqrt{n}/\epsilon^2)$. It was also shown in \citep{DBLP:conf/nips/FangLLZ18} that such a complexity is a lower bound for the problem~(\ref{eqn:P}), hence it cannot be improved in general. 

However, most of previous studies have ignored the degree of non-convexity of each component function with few exceptions discussed later. Intuitively, a non-convex function that is closer to a convex function should be easily optimized.  A natural way to measure the degree of non-convexity is by considering a notion of $\mu$-weak convexity.   In particular a function $f$ is said to be $\mu$-weakly convex if $f(\x) + \frac{\mu}{2}\|\x\|^2$ is a convex function for $\mu>0$, where $\|\cdot\|$ denotes the Euclidean norm.  If $f$ is twice-differentiable, $\mu$-weak convexity is equivalent to  that $\nabla^2 f(\x)\geq - \mu I$. Hence, the smaller the $\mu$,  the closer the function to a convex function.  For a smooth function with $L$-Lipchitz continuous gradient we define  the condition number as $L/\mu$. Therefore, an interesting question is whether the gradient complexity of a SVRG-type method can be further improved for~(\ref{eqn:P}) with $\mu$-weakly convex functions $f_i$ when $\mu$  is very small. In another word, whether the gradient complexity can be made dependent on $\mu$ such that the closer $f_i$ is  to a convex function the smaller is the complexity.  In this paper, we provide an affirmative answer to this question. We show that when the condition number of each $f_i$ is large (i.e, $L/\mu\geq \Omega(n)$), we can improve the complexity to $\widetilde O(\sqrt{nL\mu}/\epsilon^2)$, which is better than that reported in~\cite{DBLP:conf/nips/FangLLZ18,DBLP:conf/nips/ZhouXG18}. To the best of our knowledge, this is the best result for a SVRG-type method for solving problem~(\ref{eqn:P}) under a large condition number, which also matches that of a recent work focusing on developing an accelerated  SAGA-type method for solving constrained non-convex smooth optimization~\cite{DBLP:journals/corr/abs/1805.05411}.  We also establish a gradient complexity of $\widetilde O(\mu n/\epsilon^2)$ in the case of $L/\mu<O(n)$, which improves the complexity of \cite{DBLP:conf/nips/FangLLZ18,DBLP:conf/nips/ZhouXG18} when $\mu/L\leq 1/\sqrt{n}$, and is also  slightly better than that of~\cite{DBLP:journals/corr/abs/1805.05411}. The proposed algorithm is a simple but non-trivial boosting of convex Katyusha~\cite{DBLP:conf/stoc/Zhu17}. The idea is by calling convex Katyusha for solving a sequence of regularized convex problems, which is  similar to that used in the Catalyst technique for speeding up convex optimization~\cite{lin2015universal}. However, the key difference and novelty of the proposed algorithm is that we do not use any extrapolation step and the acceleration is simply achieved by carefully choosing the parameters (i.e., the number of epochs and the number of iterations for the inner loop) for convex Katyusha that are adaptive to the $\mu$-weak convexity of the problem.  We refer to the proposed algorithm as Katalyst.

Before ending this section, we present a motivating example of the considered easy non-convex problems with a large condition number.  Let us consider least-squares regression with non-convex sparsity-promoting regularizers: 
\begin{align}\label{eqn:ncsr}
\min_{\x\in\R^d}\frac{1}{n}\sum_{i=1}^n\ell(\a_i^{\top}\x, b_i)+ \lambda R(\x),
\end{align}
where $(\a_i, b_i), i=1,\ldots, n$ denote a set of $n$ observed data with $\a_i\in\R^d$ representing the feature vector and $b_i\in\R$ representing the label of the $i$-th example, $\ell(\a_i^{\top}\x, b_i)=(\a_i^{\top}\x - b_i)^2$, $R(\x)$ denotes a non-convex regularizer that enforces sparsity and $\lambda>0$ is a regularization parameter. Commonly used non-convex sparsity-promoting regularizers include logarithmic sum penalty~\cite{Candades2008}, transformed $\ell_1$ norm~\cite{DBLP:journals/corr/ZhangX14},  smoothly clipped absolute deviation (SCAD) regularization~\cite{CIS-172933}, minimax concave penalty (MCP) regularization~\cite{cunzhang10}. All of these regularizers can be written as a (scaled) $\ell_1$ norm  minus a differentiable smooth function. Let us consider the logarithmic sum penalty  $R(\x) = \sum_{i=1}^d\log(|\x_i| + \theta)$. It can be written as $R(\x) = 1/\theta\|\x\|_1 + R_2(\x)$, where $R_2(\x) = \sum_{i=1}^d (\log(|x_i|  + \theta) - |x_i|/\theta )$. It was shown that $R_2$ is a differentiable smooth non-convex function with a smoothness parameter $\mu = \frac{1}{\theta^2}$~\cite{Wen2018}. In order to formulate the problem as~(\ref{eqn:P}), we can defined $f_i(\x) = (\a_i^{\top}\x - b_i)^2/2  + \lambda R_2(\x)$ and $\psi(\x) = \lambda\|\x\|_1/\theta$. Thus, we have $f_i$ is $\mu = \lambda/\theta^2$-weakly convex and $L = \max_i\|\a_i\|^2 + \lambda/\theta^2$-smooth. When the regularization parameter $\lambda$ is very small,  then the condition number is very large. Similar discussions have been applied to other regularizers.

\section{Related Work}\label{sec: related}
Since the proposal of variance reduction techniques were proposed by~\citet{NIPS2013_4937,DBLP:conf/nips/RouxSB12,NIPS2013_4940}, they have received tremendous attention. In this paper  we are mostly interested in non-convex problems. Hence,  below we review some related works for non-convex optimization  in the form of~(\ref{eqn:P}). 

A SVRG-type method for solving non-convex smooth optimization - a special case of~(\ref{eqn:P}) with $\psi=0$ were first proposed  by two research groups independently~\citep{Reddi:2016:SVR:3045390.3045425,DBLP:conf/icml/ZhuH16}. The gradient complexity of non-convex SVRG is given by $O(n^{2/3}L/\epsilon^2)$ for finding an $\epsilon$-stationary solution such that $\E[\|\nabla\phi(\x)\|]\leq \epsilon$. It was later generalized to solving the general case~(\ref{eqn:P}) with $\psi$ being a non-smooth convex function by~\citep{DBLP:conf/cdc/ReddiSPS16,reddi2016proximal}, which also includes a SAGA-type method.  There are two basic variants of SVRG proposed in~\cite{Reddi:2016:SVR:3045390.3045425,DBLP:conf/cdc/ReddiSPS16,reddi2016proximal} one with a large  mini-batch size ($n^{2/3}$) and one with a small step size $\Theta(1/n^{2/3}L)$. In the first variant, the step size can be set to a large value $\Theta(1/L)$. In the second variant, the mini-batch size can be set to $1$. However, neither variant is practical, especially with a small step size $\Theta(1/n^{2/3}L)$, which usually leads to slow convergence in practice. In contrast, the proposed method uses a large step size $\Theta(1/L)$ and allows for using a mini-batch size of $1$. 

Recently, there are several improvements on the gradient complexity for SVRG-type methods in terms of dependence on $n$. In particular, two new SVRG-type algorithms were proposed in~\citep{DBLP:conf/nips/FangLLZ18,DBLP:conf/nips/ZhouXG18}, namely SPIDER and stochastic nested variance reduction for solving the problem~(\ref{eqn:P}) with $\psi=0$. The gradient complexity of both algorithms is given by $O(\sqrt{n}L/\epsilon^2)$ for finding an $\epsilon$-stationary solution when $\epsilon\leq O(1/\sqrt{n})$.

Few works have taken the $\mu$-weak convexity of individual functions $f_i$ into account for the development of variance reduction methods~\cite{DBLP:conf/icml/Allen-Zhu17,DBLP:journals/corr/abs/1805.05411}. Under the weakly convex assumption, \citet{DBLP:conf/icml/Allen-Zhu17} proposed a novel acceleration of SVRG-style method, namely Natasha1, which established a state-of-the-art gradient complexity when condition number is small, i.e. $\sqrt{n}\geq L/\mu$.
In the same paper, \citet{DBLP:conf/icml/Allen-Zhu17} also discussed another method, namely RepeatSVRG~\footnote{After the preliminary version of this manuscript was finished, it was brought to our attention that the updated arXiv manuscript \citep[V5]{natasha1a} reported a new result for RepeatSVRG for our considered problem different from its proceedings version, which is in the same order as the result achieved in this work. It is less practical than our method.}, which could converge faster than Natasha1 under a large condition number.  The proposed method is more practical than RepeatSVRG in that it does not require setting $\epsilon$ aprior as in RepeatSVRG. In a more recent work, \citet{DBLP:journals/corr/abs/1805.05411} proposed an SAGA-type method, which has the same gradient complexity of this work except for a worse memory cost. It is the first-work for deriving an $\mu$-dependent complexity of a variance-reduction method for solving smooth non-convex optimization problems. Our work is complementary by developing a SVRG-type method with the same complexity and for solving a broader family of problems with a non-smooth convex function $\psi$. 


It is notable that accelerating the convergence for strongly convex and smooth optimization problems with a large condition number has received a lot of attention in the community~\cite{lin2015universal,DBLP:conf/icml/FrostigGKS15,DBLP:journals/mp/LanZ18,DBLP:conf/stoc/Zhu17}. Recently, \citet{pmlr-v84-paquette18a} also considered extending the Catalyst technique for speeding up convex optimization algorithms to solving non-convex problem~(\ref{eqn:P}). However, their gradient complexity for using SVRG is only $O(nL/\epsilon^2)$, which is worse than our result. Finally, we present a comparison between this work and previous works for solving~(\ref{eqn:P}) in Table~\ref{tab:2}.

\section{Katalyst}
In this section we present the proposed Katalyst  algorithm and its analysis. We first present some notations.  
For simplicity of presentation, we let $f = \sum_{i=1}^nf_i(\x)/n$, and let \[
\text{prox}_{\lambda \psi}(\x) =\arg\min_{\z} \psi(\z) + \frac{1}{2\lambda}\|\z - \x\|^2
\] denote the proximal mapping of a function $\psi$. 
For problem~(\ref{eqn:P}), a point $\x\in\text{dom}(\psi)$ is a first-order stationary point if $ 0\in \partial \phi(\x)$, 
where $\partial\phi$ denotes the partial gradient of $\phi$. However, it is hard for an iterative algorithm to find an exact stationary point with a finite number of iterations. Therefore, some notion of $\epsilon$-stationary is usually considered. 

 In the literature, several notions of $\epsilon$-stationarity were considered by accommodating the non-smooth term $\psi$ in different way.  The first measure is simply using the sub-differentiable of the objective function $\phi$.  Under this measure,  a point $\x$ is said to be $\epsilon$-stationary  if  $\text{dist}(0, \partial \phi (\x))\leq \epsilon$, 
where $\text{dist}$ denotes the Euclidean distance from a point to a set and $\partial \phi(\x) = \nabla f(\x) + \partial \psi (\x)$.  The second measure is using the proximal gradient defined as: 
\begin{align}\label{eqn:pg}
\mathcal G_\eta(\x) = \frac{1}{\eta}(\x - \text{prox}_{\eta \psi}(\x - \nabla f(\x))).
\end{align}
Under this measure,  a point $\x$ is said to be $\epsilon$-stationary  if $\|\mathcal G_\eta(\x)\|^2\leq \epsilon$. This convergence measure has been used in~\cite{reddi2016proximal,DBLP:conf/icml/Allen-Zhu17}. The third stationarity measure that is more general is defined by using a notion of nearly stationary. In particular, a point $\x$ is called $(\epsilon, \delta)$-nearly stationary if there exists a point $\xh$ such that  
\begin{align}
\|\x - \xh\|\leq \delta, \quad \text{dist}(0, \partial \phi(\xh))\leq \epsilon.
\end{align} This convergence measure has been used in~\cite{davis2017proximally,modelweakly18,DBLP:journals/corr/abs/1805.05411,chen18stagewise}. The third convergence measure is more general that covers the first two measures as special cases. This can be easily seen for the first convergence measure with $\xh=\x$ and $\delta=0$. For the second convergence measure, we can show that when  $\|\mathcal G_\eta(\x)\|\leq \epsilon$ holds with $\eta = 1/L$, we have $\|\x - \z\|\leq \epsilon/L$ and $\text{dist}(0, \partial \phi(\z))\leq \|\mathcal G_{\eta}(\x)\| + L\|\x - \z\|\leq 2\epsilon$,  where $\z=\text{prox}_{\eta \psi}(\x - \nabla f(\x))$. 

In this paper, we use the third stationarity measure that is same as that used in~\cite{davis2017proximally,modelweakly18,DBLP:journals/corr/abs/1805.05411,chen18stagewise}, which is more suitable for our algorithm than other measures.  To this end, we introduce the Moreau envelope of $\phi$
\begin{align*}
\phi_\lambda(\x)= \min_{\z} \phi(\z) + \frac{1}{2\lambda}\|\z - \x\|^2.
\end{align*}
Further, the optimal solution to the above problem is  $\text{prox}_{\lambda\phi}(\x)$.  It is known that if $\phi(\x)$ is $\rho$-weakly convex and $\lambda<\rho^{-1}$, then its Moreau envelope $\phi_{\lambda}(\x)$ is $C^1$-smooth with the gradient given by $\nabla \phi_{\lambda}(\x) = \lambda^{-1}( \x  - \text{prox}_{\lambda \phi}(\x))$ (see e.g.~\cite{sgdweakly18}). 
A small norm of $\nabla \phi_{\lambda}(\x)$ has an interpretation that $\x$ is close to $\xh =  \text{prox}_{\lambda\phi}(\x)$ that is $\epsilon$-stationary. In particular for any $\x\in\R^d$, let $\xh = \text{prox}_{\lambda \phi}(\x)$, then we have
\begin{align}\label{eqn:keye}
\left\{
\begin{aligned}
&\phi(\xh)  \leq \phi(\x),\\
&\|\x - \xh\| = \lambda \|\nabla \phi_{\lambda}(\x)\|,
\\
&\text{dist}(0, \partial \phi(\xh))\leq \|\nabla \phi_\lambda(\x)\|.
\end{aligned}
\right.
\end{align}
This means that  a point $\x$ satisfying $\|\nabla \phi_\lambda(\x)\|\leq \epsilon$  is close to a point in distance of $O(\epsilon)$ that is $\epsilon$-stationary. Below, we will prove the convergence in terms of $\|\nabla\phi_\lambda(\x)\|$ for some $\lambda>0$  and $\|\x - \text{prox}_{\lambda \phi}(\x)\|$ as well, which is consistent with that in~\citep{DBLP:journals/corr/abs/1805.05411}. 

\begin{algorithm}[t]
    \caption{Katalyst for Non-Convex Optimization}\label{alg:meta}
    \begin{algorithmic}[1]
        \STATE \textbf{Initialize:} non-decreasing positive weights $\{w_s\}$,  $\x_0\in \text{dom}(\psi)$, $\gamma=(2\mu)^{-1}$
        \FOR {$s = 1,\ldots, S+1$}
        \STATE Let $f_{s}(\cdot) = \phi(\cdot)+\frac{1}{2\gamma}\| \cdot-\x_{s-1}\|^2$
        \STATE $\x_{s} = \text{Katyusha}(f_{s}, \x_{s-1}, K_s, \mu,L+ \mu)$
        \ENDFOR
        \STATE \textbf{Return:} $\x_{\tau+1}$, $\tau$ is randomly chosen from $\{0, \ldots, S\}$ according to probabilities $p_\tau = \frac{w_{\tau+1}}{\sum_{k=0}^{S}w_{k+1}}, \tau=0, \ldots, S$.
    \end{algorithmic}
\end{algorithm}

\begin{algorithm}[t]
    \caption{\text{Katyusha}($f,x_0, K, \sigma,\Lh$)}\label{alg:kat}
\begin{algorithmic}[1]
\STATE \textbf{Initialize: }$\tau_2 = \frac{1}{2},\; \tau_1 = \min\{\sqrt{\frac{n\sigma}{3\hat{L}}},\frac{1}{2}\},\; \eta = \frac{1}{3\tau_1\hat{L}},\;\theta = 1+\eta\sigma,\;m=\lceil\frac{\log(2\tau_1+2/\theta - 1)}{\log\theta}\rceil + 1$
\STATE $y_0=\zeta_0=\widetilde{x}^0\leftarrow x_0$

\FOR{$k=0,\ldots,K-1$}
\STATE $u^k = \nabla \hat{f}(\widetilde{x}^k)$
\FOR{$t=0,\ldots,m-1$}
\STATE $j=km+t$
\STATE $x_{j+1} = \tau_1\zeta_j+\tau_2\widetilde{x}^k+(1-\tau_1-\tau_2)y_j$
\STATE $\widetilde{\nabla}_{j+1} = u^k+\nabla \hat{f}_i(x_{j+1}) - \nabla \hat{f}_i (\widetilde{x}^k)$
\STATE $\zeta_{j+1} = \arg\min_{\zeta} \frac{1}{2\eta}\|\zeta-\zeta_j\|^2+\langle\widetilde{\nabla}_{j+1},\zeta\rangle + \psi(\zeta)$
\STATE $y_{j+1} = \arg\min_{y} \frac{3\Lh}{2}\|y-x_{j+1}\|^2 +\langle \widetilde{\nabla}_{j+1},y\rangle $ \label{alg:kat y}
\ENDFOR
\STATE 
compute $\widetilde{x}^{k+1} = \frac{\sum_{t=0}^{m-1}\theta^t y_{sm+t+1}}{\sum_{j=0}^{m-1}\theta^t}$
\ENDFOR
\STATE \textbf{Output} $\widetilde{x}^K$
\end{algorithmic}
\end{algorithm}
\subsection{Algorithm}
The Katalyst algorithm is presented in Algorithm~\ref{alg:meta}, which falls into the same framework presented in~\citep{chen18stagewise}. The idea is to construct a strongly convex function $f_s$ at each stage and then call a stochastic algorithm (Katyusha here) for approximately solving the constructed function. One may consider directly applying their Theorem 1 to prove the convergence. However, their analysis only concerns the convergence of $\|\nabla\phi_\gamma(\x_\tau)\|$ without explicit considering the convergence of $\|\x - \text{prox}_{\gamma\phi}(\x)\|$, which is important for proving the convergence of $\|\nabla \phi(\x)\|$ when $\psi=0$. By using the second inequality in~(\ref{eqn:keye}), one can bound $2\mu \|\x - \text{prox}_{\gamma\phi}(\x)\|$ by $\|\nabla\phi_\gamma(\x_\tau)\|$. Nevertheless, in the case of $\mu\ll 1$, such analysis will yield much worse gradient complexity than that is achieved below. Hence, we need a more refined analysis of the proposed algorithm with a careful setting of Katyusha for solving each subproblem. 

A modified Katyusha is employed at each stage for solving the regularized subproblem $f_s(\x)$, which is assumed to be $\sigma$-strongly convex and have $\hat L$-Lipschitz continuous gradients for the smooth components. The modified Katyusha is presented in Algorithm~\ref{alg:kat}.  Given the way that $f_s$ is constructed, we can write it as 
\begin{align*}
f_s(\x) = &\frac{1}{n}\sum_{i=1}^n(\underbrace{f_i(\x)  + \frac{\mu}{2}\|\x - \x_{s-1}\|^2}\limits_{\hat f_i(\x)})\\
& + \underbrace{\frac{\gamma^{-1} - \mu}{2}\|\x - \x_{s-1}\|^2 + \psi(\x)}\limits_{\hat \psi(\x)}.
\end{align*}
It is easy to see that $\hat f_i(\x)$ is convex and $\hat L = (L + \mu)$-smooth, and $\hat \psi(\x)$ is $\sigma = (\gamma^{-1} - \mu)$-strongly convex, which satisfy the conditions made in~\citep{DBLP:conf/stoc/Zhu17}. In each call of the modified Katyusha, $\hat f_i$ is considered as the smooth component, and $\hat\psi$ is considered as the non-smooth regularizer.  The key difference between our modified Katyusha and the original Katyusha algorithm for solving smooth and strongly convex problems in~\citep{DBLP:conf/stoc/Zhu17} lies at the setting of $\tau_1$, $m$ and $K$. For example in~\citep{DBLP:conf/stoc/Zhu17}, the value of $\tau_1$ is set to $\tau_1 = \min(\sqrt{m\sigma/3\hat L}, 1/2)$. However, in our modified Katyusha the value of $\tau_1$ is independent of $m$. The value of $m$ is also different from that suggested in~\citep{DBLP:conf/stoc/Zhu17}, which is suggested to $2n$. The value of $K$ (the number of epochs) in the original Katyusha is chosen such that the objective gap is less than $\epsilon$. In our modified Katyusha, it is set to make sure that the objective function $f_s(\x)$ is decreased by a sufficient amount. Actually, we do not solve $\min_{\x}f_s(\x)$ to an $\epsilon$-accuracy level in terms of the objective value.  
Below, we present the gradient complexity of Katalyst (i.e., the order of number of evaluations of $\nabla \phi_i(\x))$ based on the following basic assumptions. 
\begin{assumption}\label{a1} For problem~(\ref{eqn:P}), we assume that (i) $f_i(\cdot)$ is $L$-smooth and $\mu$-weakly convex,  (ii) $\psi$ is a non-smooth convex function, and (iii) there exists $\Delta_\phi>0$ such that $\phi(\x_0)-\min_{\x}\phi(\x)\leq \Delta_\phi$.
\end{assumption}

\begin{theorem}\label{thm:katyusha}
Suppose Assumption~\ref{a1} holds. Let $w_s = s^\alpha, \alpha>0, \gamma = \frac{1}{2\mu}$, $\Lh = L+\mu$, $\sigma = \mu$, and in each call of \text{Katyusha} let $\tau_1 = \min\{\sqrt{\frac{n\sigma}{3\Lh}},\frac{1}{2}\}$, step size $\eta = \frac{1}{3\tau_1\Lh}$, $\tau_2 = 1/2$, $\theta = 1+\eta\sigma$, and \[
K_s = \left\lceil\frac{\log (D_s)}{m\log (\theta)}\right\rceil,\quad m=\left\lceil\frac{\log(2\tau_1+2/\theta - 1)}{\log\theta}\right\rceil + 1,\] where $D_{s} = \max\{24\hat L/\mu, 2\hat L^3/\mu^3, 8L^2s/\mu^2\}$.
Then we have that
\begin{align*}
&\max\{\E[\|\nabla\phi_\gamma(\x_{\tau+1})\|^2], \E[L^2\|\x_{\tau+1} - \z_{\tau+1}\|^2]\}\\
&\leq \frac{34\mu\Delta_\phi(\alpha+1)}{S+1} + \frac{48\mu \Delta_\phi (\alpha+1)}{(S+1)\alpha^{\I_{\alpha<1}}},
\end{align*}
where  $\z_{\tau+1}= \text{prox}_{\gamma\phi}(\x_{\tau})$, $\tau$ is randomly chosen from $\{0, \ldots, S\}$ according to probabilities $p_\tau = \frac{w_{\tau+1}}{\sum_{k=0}^{S}w_{k+1}}, \tau=0, \ldots, S$.
Furthermore, the total gradient complexity for finding $\x_{\tau+1}$ such that \[\max(\E[\|\nabla\phi_\gamma(\x_{\tau+1})\|^2], L^2\E[\|\x_{\tau+1} - \z_{\tau+1}\|^2])\leq \epsilon^2\] is
\begin{align*}
N(\epsilon)=\left\{
    \begin{aligned}
       O\bigg( (\mu n+\sqrt{n\mu L})\log \bigg(\frac{L}{\mu\epsilon}\bigg)\frac{1}{\epsilon^2}\bigg),\;\; n\geq \frac{3L}{4\mu},\\
       O\bigg(\sqrt{nL\mu}\log \bigg(\frac{L}{\mu\epsilon}\bigg)\frac{1}{\epsilon^2}\bigg),\qquad n\leq \frac{3L}{4\mu}.
    \end{aligned}
\right.
\end{align*}
\end{theorem}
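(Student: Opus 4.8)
The plan is to read Algorithm~\ref{alg:meta} as an inexact proximal-point scheme on $\phi$ with parameter $\gamma=(2\mu)^{-1}$: at stage $s$ the exact minimizer of the strongly convex subproblem $f_s$ is $\z_s:=\text{prox}_{\gamma\phi}(\x_{s-1})$, and, as observed just before the theorem, $\hat f_i$ is convex and $\hat L=(L+\mu)$-smooth while $\hat\psi$ is $\sigma=\mu$-strongly convex, so each call of the modified Katyusha is a valid accelerated solve of a smooth, strongly convex finite-sum problem. Let $\epsilon_s:=\E[f_s(\x_s)-f_s(\z_s)]$ be the expected subproblem suboptimality; by $\sigma$-strong convexity of $f_s$ one gets $\E\|\x_s-\z_s\|^2\le\frac{2}{\sigma}\epsilon_s=\frac{2}{\mu}\epsilon_s$, which already disposes of the quantity $L^2\E\|\x_{\tau+1}-\z_{\tau+1}\|^2$ once $\epsilon_{\tau+1}$ is bounded. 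Throughout I use the identity $\nabla\phi_\gamma(\x_{s-1})=\gamma^{-1}(\x_{s-1}-\z_s)$, valid since $\phi$ is $\mu$-weakly convex and $\gamma<\mu^{-1}$, so that $\|\x_{s-1}-\z_s\|=\gamma\|\nabla\phi_\gamma(\x_{s-1})\|$.

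There are two analytic ingredients. (a) A one-step decrease of the Moreau envelope: combining $\phi_\gamma(\x_s)\le\phi(\x_s)=f_s(\x_s)-\frac{1}{2\gamma}\|\x_s-\x_{s-1}\|^2$, the equalities $f_s(\x_{s-1})=\phi(\x_{s-1})$ and $f_s(\z_s)=\phi_\gamma(\x_{s-1})$, the elementary bound $\|\x_s-\x_{s-1}\|^2\ge\frac12\|\x_{s-1}-\z_s\|^2-\|\x_s-\z_s\|^2$, the identity above, and $\gamma\sigma=\tfrac12$, and then taking expectations, yields $\frac{\gamma}{4}\E\|\nabla\phi_\gamma(\x_{s-1})\|^2\le\E[\phi_\gamma(\x_{s-1})-\phi_\gamma(\x_s)]+3\epsilon_s$; dropping the negative term also gives the running bound $\phi_\gamma(\x_s)-\min_\x\phi(\x)\le\Delta_\phi+3\sum_{j\le s}\epsilon_j$. (b) A sharp linear-rate bound for the modified Katyusha: with the stated $\tau_1,\eta,\theta$, and with $m$ chosen exactly so that $\theta^m$ dominates the coefficient ratio in Katyusha's Lyapunov function, one epoch contracts that Lyapunov function by the factor $\theta^{-m}$ in expectation, so after $K_s$ epochs $\epsilon_s\le\theta^{-mK_s}\big(c_1(f_s(\x_{s-1})-f_s(\z_s))+c_2\|\x_{s-1}-\z_s\|^2\big)$ with $c_1,c_2$ depending on the parameters. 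Since $K_s$ is set so that $\theta^{mK_s}\ge D_s$ and $\|\x_{s-1}-\z_s\|^2=\gamma^2\|\nabla\phi_\gamma(\x_{s-1})\|^2$, the three terms in $D_s=\max\{24\hat L/\mu,\,2\hat L^3/\mu^3,\,8L^2s/\mu^2\}$ are exactly what is required so that, simultaneously: the relative error $\epsilon_s/(f_s(\x_{s-1})-f_s(\z_s))$ sits below a small absolute constant (keeping the error accumulation controlled); the $\|\nabla\phi_\gamma(\x_{s-1})\|^2$-proportional part of $\epsilon_s$ is small enough to be reabsorbed into the left-hand side of the telescoped inequality; and the residual part of $\epsilon_s$ decays like $O(1/s)$ relative to the driving quantities, with the correct $\mu$-versus-$L$ scaling.

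Now I telescope. Multiplying the one-step inequality of (a) by $w_s=s^\alpha$ and summing $s=1,\dots,S+1$, and using that $\{w_s\}$ is non-decreasing, Abel summation reduces $\sum_s w_s\E[\phi_\gamma(\x_{s-1})-\phi_\gamma(\x_s)]$ to $w_{S+1}$ times a boundary term plus nonnegative increments $(w_{s+1}-w_s)$ multiplying $\phi_\gamma(\x_s)$, all bounded using $\phi_\gamma(\x_0)\le\phi(\x_0)\le\min_\x\phi(\x)+\Delta_\phi$, $\phi_\gamma(\x_s)\ge\min_\x\phi(\x)$, and the running bound from (a). To handle $\sum_s w_s\epsilon_s$ I use $f_s(\x_{s-1})-f_s(\z_s)=\phi(\x_{s-1})-\phi_\gamma(\x_{s-1})\le\phi_\gamma(\x_{s-2})-\phi_\gamma(\x_{s-1})+\epsilon_{s-1}$ (in expectation, $s\ge2$), so that the relative-error bounds of (b) turn $\sum_s w_s\epsilon_s$ into a series that telescopes to $O(w_{S+1}\Delta_\phi)$, or, using the $O(1/s)$-decaying variant, into $O\!\big(\tfrac{\mu}{L}\Delta_\phi\sum_s s^{\alpha-1}\big)$. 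Dividing by $\gamma^{-1}\sum_{s=1}^{S+1}w_s$ and using $\sum_{s=1}^{S+1}s^\alpha=\Theta((S+1)^{\alpha+1}/(\alpha+1))$ and $\sum_{s=1}^{S+1}s^{\alpha-1}=O((S+1)^\alpha/\alpha^{\I_{\alpha<1}})$ gives the stated bound on the $w_{\tau+1}$-weighted average of $\E\|\nabla\phi_\gamma(\x_\tau)\|^2$ and on $\frac{2L^2}{\mu}\E\epsilon_{\tau+1}$. To pass from $\x_\tau$ to the returned iterate $\x_{\tau+1}$, I use that $\text{prox}_{\gamma\phi}$ is $(1-\gamma\mu)^{-1}=2$-Lipschitz, whence $\|\nabla\phi_\gamma(\x_{\tau+1})\|\le3\gamma^{-1}\|\x_{\tau+1}-\z_{\tau+1}\|+2\|\nabla\phi_\gamma(\x_\tau)\|$; the first term is $O(\mu)$ times $\|\x_{\tau+1}-\z_{\tau+1}\|$, negligible against the $L^2\|\x_{\tau+1}-\z_{\tau+1}\|^2$ bound, and the second is controlled by the telescoping. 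The gradient complexity then follows: making the right-hand side $\le\epsilon^2$ needs $S+1=O(\mu\Delta_\phi/\epsilon^2)$ stages (with $\alpha$ a constant), and stage $s$ costs $K_s(n+O(m))$ gradient evaluations, where one checks $m=\Theta(n)$, $m\log\theta=\Theta(\tau_1)$ and $K_s=\widetilde{O}(1/\tau_1)$ when $n\le 3L/(4\mu)$, so a stage costs $\widetilde{O}(n/\tau_1)=\widetilde{O}(\sqrt{nL/\mu})$ and the total is $\widetilde{O}(\sqrt{nL\mu}/\epsilon^2)$; and $\tau_1=\tfrac12$, $m=\Theta(L/\mu)=O(n)$, $m\log\theta=\Theta(1)$, $K_s=\widetilde{O}(1)$ when $n\ge 3L/(4\mu)$, so a stage costs $\widetilde{O}(n)$ and the total is $\widetilde{O}(\mu n/\epsilon^2)=\widetilde{O}((\mu n+\sqrt{n\mu L})/\epsilon^2)$, since $\mu n\ge\Omega(\sqrt{n\mu L})$ in that regime.

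The main obstacle is precisely this coupled bookkeeping of the errors $\epsilon_s$. A black-box inexact-prox-point or Catalyst analysis would demand $\epsilon_s$ small in an absolute ($\epsilon$-dependent) sense at each of the $\Theta(\mu\Delta_\phi/\epsilon^2)$ stages, running Katyusha against an initial gap that could be $\Theta(\Delta_\phi)$ every time; this not only costs a spurious $\log(1/\epsilon)$ factor per stage but, more seriously, cannot produce the correct dependence on $\mu$ — in particular $L^2\E\|\x_{\tau+1}-\z_{\tau+1}\|^2=\tfrac{2L^2}{\mu}\E\epsilon_{\tau+1}$ would come out as $O(L^2\Delta_\phi/(\mu S))$ rather than $O(\mu\Delta_\phi/S)$. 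One must instead tie the per-stage initial gap to the running decrease of $\phi_\gamma$ so that it telescopes to $O(\Delta_\phi)$ over all stages, and calibrate $D_s$ (hence its three terms) so that the $\|\nabla\phi_\gamma(\x_{s-1})\|^2$-part of $\epsilon_s$ is reabsorbed and the residual decays in $s$ with the right scaling — which is exactly why the Moreau-envelope argument and the specific $D_s$ are needed rather than a direct appeal to the stagewise theorem of~\citep{chen18stagewise}.
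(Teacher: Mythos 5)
Your proposal is correct and follows essentially the same route as the paper: treat Algorithm~\ref{alg:meta} as an inexact proximal point method, invoke the linear convergence of one call of the modified Katyusha (the paper's Theorem~\ref{thm:katyusha0}) with the initial gap and $\|\x_{s-1}-\z_s\|^2=\gamma^2\|\nabla\phi_\gamma(\x_{s-1})\|^2$ on the right-hand side, convert subproblem suboptimality to $\E\|\x_s-\z_s\|^2$ via $\sigma$-strong convexity, calibrate the three terms of $D_s$ exactly as the paper does (relative-error absorption via $24\hat L/\mu$, reabsorption of the gradient-norm term via $2\hat L^3/\mu^3$, and the $O(1/s)$ decay via $8L^2s/\mu^2$), telescope with the weights $w_s=s^\alpha$, and split the complexity into the two regimes of $\tau_1$. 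The only substantive differences are bookkeeping choices: you telescope the Moreau envelope $\phi_\gamma(\x_{s-1})-\phi_\gamma(\x_s)$ and bound each stage's initial gap recursively by $\phi_\gamma(\x_{s-2})-\phi_\gamma(\x_{s-1})+\epsilon_{s-1}$, whereas the paper telescopes $\phi(\x_{s-1})-\phi(\x_s)$ (Lemmas~\ref{lem:1} and~\ref{lem:2}) and bounds the gap directly by $\Delta_\phi$ using the monotone decrease of $\E[\phi(\x_s)]$ — the paper's version is a bit lighter since it avoids tracking the accumulated $\sum_j\epsilon_j$ inside the gap bound; and for passing from $\x_\tau$ to the returned $\x_{\tau+1}$ you use $2$-Lipschitzness of $\text{prox}_{\gamma\phi}$ together with the already-proved bound on $\|\x_{\tau+1}-\z_{\tau+1}\|$, while the paper uses the Lipschitz continuity of $\nabla\phi_\gamma$ combined with $\frac{1}{2\gamma}\E\|\x_{\tau+1}-\x_\tau\|^2\le\frac{\Delta_\phi(\alpha+1)}{S+1}$. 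Both variants are valid and give the same order in $\mu,\Delta_\phi,S$, though your route produces somewhat larger absolute constants (e.g., a factor $8$ rather than $2$ multiplying $\E\|\nabla\phi_\gamma(\x_\tau)\|^2$ in the index-shift step), so the literal constants $34$ and $48$ in the statement come out of the paper's specific accounting rather than yours; also note $L^2\E\|\x_{\tau+1}-\z_{\tau+1}\|^2\le\frac{2L^2}{\mu}\E[\epsilon_{\tau+1}]$ is an inequality, not an equality as written once in your sketch.
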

Indeed,  when $\psi=0$  we can derive a slightly stronger result stated in the following theorem. 
\begin{theorem}\label{cor:katyusha}
Suppose Assumption~\ref{a1} holds and $\psi=0$. With the same parameter values    as in Theorem \ref{thm:katyusha}  except that $K = \left\lceil\frac{\log (D)}{m\log (\theta)}\right\rceil$, where $D = \max(24\hat L/\mu, 2\hat L^3/\mu^3)$. The total gradient complexity for finding $\x_{\tau+1}$ such that $\E[\|\nabla\phi(\x_{\tau+1})\|^2]\leq \epsilon^2$ is
\begin{align*}
N(\epsilon)=\left\{
    \begin{aligned}
       O\bigg( (\mu n+\sqrt{n\mu L})\log \bigg(\frac{L}{\mu}\bigg)\frac{1}{\epsilon^2}\bigg),\;\; n\geq \frac{3L}{4\mu},\\
       O\bigg(\sqrt{nL\mu}\log \bigg(\frac{L}{\mu}\bigg)\frac{1}{\epsilon^2}\bigg),\qquad n\leq \frac{3L}{4\mu}.
    \end{aligned}
\right.
\end{align*}
\end{theorem}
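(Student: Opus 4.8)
The plan is to run the two-level argument underlying Theorem~\ref{thm:katyusha}, but to cash in the fact that when $\psi=0$ the objective $\phi=f$ is $L$-smooth and, more importantly, each subproblem $f_s(\x)=\phi(\x)+\frac{1}{2\gamma}\|\x-\x_{s-1}\|^2$ is now \emph{fully smooth}: it is the average of the $(L+\mu)$-smooth convex functions $\hat f_i$ plus the $\mu$-smooth quadratic $\frac{\mu}{2}\|\x-\x_{s-1}\|^2$, so its gradient is $(L+2\mu)$-Lipschitz. Write $\z_s=\text{prox}_{\gamma\phi}(\x_{s-1})=\arg\min f_s$ and $\epsilon_s=f_s(\x_s)-f_s(\z_s)\ge 0$; recall $f_s$ is $\mu$-strongly convex and that Katyusha contracts linearly, $\E[\epsilon_s\mid\x_{s-1}]\le \theta^{-mK}\,(f_s(\x_{s-1})-f_s(\z_s))$ with $\theta^{-mK}\le 1/D$. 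The first step is a per-stage gradient bound: from $\|\nabla\phi(\x_s)\|\le L\|\x_s-\z_s\|+\|\nabla\phi(\z_s)\|$ and the optimality identity $\nabla\phi(\z_s)=\gamma^{-1}(\x_{s-1}-\z_s)=\nabla\phi_\gamma(\x_{s-1})$, the task reduces to controlling $L\|\x_s-\z_s\|$. Strong convexity gives $\|\x_s-\z_s\|^2\le 2\epsilon_s/\mu$, and --- this is where $\psi=0$ enters --- smoothness of $f_s$ bounds the \emph{initial} Katyusha gap by $f_s(\x_{s-1})-f_s(\z_s)\le\frac{L+2\mu}{2}\|\x_{s-1}-\z_s\|^2=\frac{(L+2\mu)\gamma^2}{2}\|\nabla\phi_\gamma(\x_{s-1})\|^2$, i.e.\ a \emph{quadratic} in $\|\nabla\phi_\gamma(\x_{s-1})\|$ (for non-smooth $\psi$ only the non-shrinking bound $\le\phi(\x_{s-1})-\phi_*$ with $\phi_*=\min_{\x}\phi(\x)$ is available). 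Hence the component $D\ge 2\hat L^3/\mu^3$ forces $\E[L^2\|\x_s-\z_s\|^2\mid\x_{s-1}]\le\frac{3}{8}\|\nabla\phi_\gamma(\x_{s-1})\|^2$, and therefore $\E[\|\nabla\phi(\x_s)\|^2\mid\x_{s-1}]\le 2L^2\|\x_s-\z_s\|^2+2\|\nabla\phi_\gamma(\x_{s-1})\|^2\le 3\|\nabla\phi_\gamma(\x_{s-1})\|^2$ in expectation.

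Next I would reuse the descent step from the proof of Theorem~\ref{thm:katyusha}: using $f_s(\x_s)\ge f_s(\z_s)$, $\phi_\gamma(\x_s)\le\phi(\x_s)$, and $\|\x_s-\x_{s-1}\|^2\ge\frac{\gamma^2}{2}\|\nabla\phi_\gamma(\x_{s-1})\|^2-\|\x_s-\z_s\|^2$ one gets $\E[\phi(\x_s)\mid\x_{s-1}]\le\phi(\x_{s-1})+3\E[\epsilon_s\mid\x_{s-1}]-\frac{\gamma}{4}\|\nabla\phi_\gamma(\x_{s-1})\|^2$. Now the other component $D\ge 24\hat L/\mu$ makes $3\E[\epsilon_s\mid\x_{s-1}]\le\frac{\gamma}{8}\|\nabla\phi_\gamma(\x_{s-1})\|^2$, so taking full expectations gives the clean inequality $\frac{\gamma}{8}\E\|\nabla\phi_\gamma(\x_{s-1})\|^2\le\E\phi(\x_{s-1})-\E\phi(\x_s)$. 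In particular $\E\phi(\x_s)$ is non-increasing, hence $\le\phi(\x_0)$; multiplying by $w_s=s^\alpha$, summing $s=1,\dots,S+1$, and performing an Abel summation with the non-decreasing weights yields $\sum_{s=1}^{S+1}w_s\E\|\nabla\phi_\gamma(\x_{s-1})\|^2\le\frac{8}{\gamma}w_{S+1}\Delta_\phi=16\mu w_{S+1}\Delta_\phi$. Since $\x_{\tau+1}$ is returned with $\x_\tau$ carrying weight proportional to $w_{\tau+1}$, the left-hand side divided by $\sum_{s=1}^{S+1}w_s$ is precisely $\E\|\nabla\phi_\gamma(\x_\tau)\|^2$, and $w_{S+1}/\sum_{s=1}^{S+1}s^\alpha\le(\alpha+1)/(S+1)$; combined with the bound of the previous paragraph this gives $\E\|\nabla\phi(\x_{\tau+1})\|^2\le 48\mu(\alpha+1)\Delta_\phi/(S+1)$.

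To make the last quantity at most $\epsilon^2$ it suffices to take $S=O(\mu\Delta_\phi/\epsilon^2)$ (with $\alpha$ any fixed positive constant). It then remains to multiply $S$ by the cost $K(n+2m)$ of one call of Algorithm~\ref{alg:kat}, i.e.\ $n$ component gradients for $u^k$ per epoch plus two per inner iteration. In the regime $n\le\frac{3L}{4\mu}$ one has $\tau_1=\sqrt{n\mu/(3\hat L)}$, whence $m=\Theta(n)$, $m\log\theta=\Theta(\sqrt{n\mu/\hat L})$, $K=\Theta(\sqrt{\hat L/(n\mu)}\log(L/\mu))$, so the per-stage cost is $\Theta(\sqrt{nL/\mu}\log(L/\mu))$ and the total is $O(\sqrt{nL\mu}\log(L/\mu)/\epsilon^2)$. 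In the regime $n\ge\frac{3L}{4\mu}$ one has $\tau_1=\frac12$, whence $m=\Theta(\hat L/\mu)$, $m\log\theta=\Theta(1)$, $K=\Theta(\log(L/\mu))$, so the per-stage cost is $\Theta((n+L/\mu)\log(L/\mu))=\Theta(n\log(L/\mu))$ and the total is $O((\mu n+\sqrt{n\mu L})\log(L/\mu)/\epsilon^2)$; these are exactly the two branches of $N(\epsilon)$.

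The main obstacle is the calibration in the previous steps: one must check that the single stage- and $\epsilon$-independent constant $D=\max(24\hat L/\mu,\,2\hat L^3/\mu^3)$ is simultaneously large enough to absorb $3\E[\epsilon_s\mid\x_{s-1}]$ in the descent inequality and to shrink $L^2\|\x_s-\z_s\|^2$ below $\frac{3}{8}\|\nabla\phi_\gamma(\x_{s-1})\|^2$. Both reductions hinge on the quadratic (rather than $O(\Delta_\phi)$) bound on the initial Katyusha gap, which is available only because $f_s$ is smooth when $\psi=0$; for non-smooth $\psi$ this fails and one is pushed to the stage-growing accuracy $D_s$ of Theorem~\ref{thm:katyusha} together with the extra $\log(1/\epsilon)$. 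A secondary, purely mechanical task is to make the Katyusha per-epoch contraction precise (so that $\E[\epsilon_s\mid\x_{s-1}]\le\theta^{-mK}\cdot(\text{initial gap})$ holds, the constant being folded into the slack already present in $D$) and to confirm the $m,\theta,K$ asymptotics in the two parameter regimes.
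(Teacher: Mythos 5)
Your proposal is correct and follows essentially the same route as the paper: the one-call Katyusha contraction, the key observation that for $\psi=0$ the initial gap satisfies $f_s(\x_{s-1})-f_s(\z_s)\le O(\hat L)\|\x_{s-1}-\z_s\|^2$ so that the accuracy threshold $D$ can be made stage- and $\epsilon$-independent, the decomposition $\|\nabla\phi(\x_{s})\|^2\le 2L^2\|\x_s-\z_s\|^2+2\|\nabla\phi_\gamma(\x_{s-1})\|^2$, the weighted telescoping with $w_s=s^\alpha$, and the same two-regime accounting of $(n+m)K$ per stage. The numerical constants you quote (e.g.\ $3/8$, $\gamma/8$) are looser or tighter in places than the paper's, but as you note they are absorbed into the slack in $D$ and the $O(\cdot)$, matching the paper's own treatment.
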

{\bf Remark:} Our results in the above two theorems match that in~\citep{DBLP:journals/corr/abs/1805.05411}. Indeed, our result in Theorem~\ref{thm:katyusha} is slightly more general than that in~\citep{DBLP:journals/corr/abs/1805.05411}, which only considers the constrained smooth optimization with $\psi$ being the indicator function of a convex set. 


\subsection{Analysis}
In this subsection, we will present the convergence analysis for Katalyst. 
We first state the convergence property of modified Katyusha (Algorithm~\ref{alg:kat})
for solving following problem:
\begin{align}\label{eqn:P kat}
\min_{\x\in\R^d} f(\x)\coloneqq \hat{f}(\x) + \hat{\psi}(\x) = \frac{1}{n}\sum_{i=1}^n\hat{f}_i(\x) + \hat{\psi}(\x),
\end{align}
where each $\hat{f}_i $ is $\hat{L}$-smooth and convex, $\hat{\psi}(\x)$ is $\sigma$-strongly convex.
\begin{theorem}{(One call of Katyusha)}\label{thm:katyusha0}
Suppose that $\tau_1 = \min\{\sqrt{\frac{n\sigma}{3\hat{L}}},\frac{1}{2}\}$, $\tau_2 = 1/2$, $\eta = \frac{1}{3\tau_1 \hat{L}}$,and $m=\lceil\frac{\log(2\tau_1+2/\theta - 1)}{\log\theta}\rceil + 1$. Defining $\theta \coloneqq 1+\eta\sigma$, $D_t \coloneqq f(\y_t)-f(\x)$, $\Dt^k\coloneqq f(\widetilde{\x}^k) - f(\x)$ for any $\x$, Algorithm~\ref{alg:kat} outputs a solution $\widetilde{\x}^K$ of problem~(\ref{eqn:P kat}) such that
\begin{align}\label{eqn:key}
    \E[\Dt^K] &\leq 2\tau_1\theta^{-mK}(\frac{1-\tau_1}{\tau_1}\Dt^0 + \frac{1}{2\eta}\|\zeta_0-\x\|^2).
\end{align}
\end{theorem}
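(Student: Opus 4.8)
The plan is to re-establish the linear-convergence guarantee of convex Katyusha \citep{DBLP:conf/stoc/Zhu17} for the $\sigma$-strongly convex composite problem~(\ref{eqn:P kat}), but with the parameters decoupled as in Algorithm~\ref{alg:kat} --- in particular $\tau_1$ chosen independently of $m$, and $m$ set to the stated ceiling --- tracking all constants carefully. The argument has three parts: a one-iteration coupling inequality for the inner loop; a telescoping over one outer iteration $k$; and an unrolling over the $K$ outer iterations.

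First I would prove, for a fixed comparator $\x$ and every inner index $j=km+t$, a bound of the form
\begin{align*}
&\frac{1}{\tau_1}\E_i\big[f(\y_{j+1})-f(\x)\big]+\frac{\theta}{2\eta}\E_i\|\zeta_{j+1}-\x\|^2\\
&\le \frac{1-\tau_1-\tau_2}{\tau_1}\big(f(\y_j)-f(\x)\big)\\
&\quad +\frac{\tau_2}{\tau_1}\big(f(\widetilde{\x}^k)-f(\x)\big)+\frac{1}{2\eta}\|\zeta_j-\x\|^2,
\end{align*}
where $\E_i$ is the expectation over the index sampled at step $j$. This is the standard Katyusha coupling estimate, assembled from four ingredients: (i) $\hat L$-smoothness of the $\hat f_i$ applied along the gradient step for $\y$, where the coefficient $3\hat L$ (rather than $\hat L$) leaves slack; (ii) the SVRG variance bound $\E_i\|\widetilde\nabla_{j+1}-\nabla\hat f(x_{j+1})\|^2\le 2\hat L\big(\hat f(\widetilde{\x}^k)-\hat f(x_{j+1})-\langle\nabla\hat f(x_{j+1}),\widetilde{\x}^k-x_{j+1}\rangle\big)$, absorbed into that slack; (iii) the three-point (mirror-descent) inequality for the proximal $\zeta$-update, where the $\sigma$-strong convexity of $\hat\psi$ produces the factor $\theta=1+\eta\sigma$ multiplying $\|\zeta_{j+1}-\x\|^2$; and (iv) the momentum identity $x_{j+1}=\tau_1\zeta_j+\tau_2\widetilde{\x}^k+(1-\tau_1-\tau_2)\y_j$ together with convexity of $\hat f$ and $\hat\psi$, which turns linearized quantities into the function-gap combination on the right. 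For our parameter values I only need $\tau_1\le\tfrac12$, $\tau_1+\tau_2\le1$, and $3\tau_1\hat L\eta=1$; none of these is affected by decoupling $\tau_1$ from $m$, so this part is identical to \citep{DBLP:conf/stoc/Zhu17}.

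Next I would multiply the $t$-th inequality by $\theta^t$, sum over $t=0,\dots,m-1$, and telescope: the quadratic terms collapse (thanks to the matching $\theta$) into $\tfrac1{2\eta}\big(\theta^m\|\zeta_{(k+1)m}-\x\|^2-\|\zeta_{km}-\x\|^2\big)$, while convexity of $f$ and the definition $\widetilde{\x}^{k+1}=\big(\sum_t\theta^t\y_{km+t+1}\big)/\big(\sum_t\theta^t\big)$ give, by Jensen, $\big(\sum_{t=0}^{m-1}\theta^t\big)\big(f(\widetilde{\x}^{k+1})-f(\x)\big)\le\sum_{t=0}^{m-1}\theta^t\big(f(\y_{km+t+1})-f(\x)\big)$. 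After reindexing and (taking $\x=\x^*$ so as to) discarding the nonnegative interior gaps $f(\y_{km+t})-f(\x^*)$, this yields an outer-iteration recursion which, for the Lyapunov combination $\Phi_k:=\frac{1-\tau_1}{\tau_1}\big(f(\widetilde{\x}^k)-f(\x^*)\big)+\frac1{2\eta}\|\zeta_{km}-\x^*\|^2$, I would put in the form $\E[\Phi_{k+1}]\le\theta^{-m}\E[\Phi_k]$ once $\theta^m$ clears a threshold depending only on $\tau_1$ and $\theta$. The stated epoch length $m=\lceil\log(2\tau_1+2/\theta-1)/\log\theta\rceil+1$ is exactly the smallest integer with $\theta^{m-1}\ge 2\tau_1+2/\theta-1$, which is that threshold condition. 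Granting this, iterating gives $\E[\Phi_K]\le\theta^{-mK}\Phi_0$; since $\y_0=\zeta_0=\widetilde{\x}^0=x_0$ we have $\Phi_0=\frac{1-\tau_1}{\tau_1}\Dt^0+\frac1{2\eta}\|\zeta_0-\x^*\|^2$, and since $\tau_1\le\tfrac12$ we have $\Dt^K\le2\tau_1\Phi_K$, whence $\E[\Dt^K]\le2\tau_1\theta^{-mK}\big(\frac{1-\tau_1}{\tau_1}\Dt^0+\frac1{2\eta}\|\zeta_0-\x^*\|^2\big)$, which is the claim. The version for arbitrary $\x$ follows because the one-iteration inequality and the Jensen step hold verbatim for any fixed $\x$; the only use of $\x=\x^*$ was to discard nonnegative interior gaps, which can instead be retained on the larger side.

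The main obstacle --- and the only genuinely new ingredient relative to \citep{DBLP:conf/stoc/Zhu17} --- is the last part: carrying the constants from the telescoped inequality through accurately enough to verify that $2\tau_1+2/\theta-1$ is exactly the right threshold on $\theta^{m-1}$, and hence that the minimal epoch length $m$ above already suffices for a $\theta^{-m}$-contraction of $\Phi_k$. Everything else is a careful but essentially routine re-run of the Katyusha analysis with the modified $\tau_1$, $m$, and $K$.
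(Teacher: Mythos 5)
Your overall route---the one-iteration Katyusha coupling inequality, the $\theta^t$-weighted summation with Jensen over each epoch, and a Lyapunov telescoping across epochs---is the same as the paper's (which imports the coupling inequality from Allen-Zhu in its appendix and then telescopes). However, the step you yourself single out as the crux is stated with the inequality in the wrong direction, and executed as written it would fail. The epoch length enters the argument as an \emph{upper} bound: after the weighted summation, the coefficient that Jensen produces in front of $\E[\Dt^{k+1}]$ is $\frac{(\tau_1+\tau_2-1+1/\theta)\theta}{\tau_1}\sum_{t=0}^{m-1}\theta^t$, and for the advertised $\theta^{-m}$ per-epoch contraction this must dominate $\theta^m\cdot\frac{\tau_2}{\tau_1}\sum_{t=0}^{m-1}\theta^t$, i.e.\ (with $\tau_2=1/2$) one needs $\theta^{m-1}\leq 2\tau_1+2/\theta-1$; the paper only certifies the slacker $\theta^{m-2}\leq 2\tau_1+2/\theta-1$ for its choice of $m$, which is why its appendix bound carries an extra factor $\theta\le 2$ (ending with $4\tau_1$ rather than the displayed $2\tau_1$). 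Your reading---that $m$ is the \emph{smallest} integer with $\theta^{m-1}\geq 2\tau_1+2/\theta-1$, so that ``once $\theta^m$ clears a threshold'' the contraction holds---would permit arbitrarily large $m$, which is false: the ratio of the left-hand $\Dt^{k+1}$ coefficient to the right-hand $\Dt^{k}$ coefficient equals $\theta(2\tau_1+2/\theta-1)$, a constant independent of $m$, so the recursion can never contract the function-gap part of the Lyapunov by more than this constant factor per epoch, no matter how long the epoch is. The prescribed $m$ is (up to one unit of $\theta$-slack) the \emph{largest} epoch length compatible with the claimed rate, not the smallest one clearing a lower threshold; getting this direction right is exactly the new bookkeeping that the decoupled parameters require.

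A secondary issue is your Lyapunov bookkeeping: the two-term combination $\Phi_k=\frac{1-\tau_1}{\tau_1}\Dt^k+\frac{1}{2\eta}\|\zeta_{km}-\x\|^2$ does not close the recursion, because the summed inequality leaves the epoch-boundary gap $\frac{1-\tau_1-\tau_2}{\tau_1}\big(f(\y_{km})-f(\x)\big)$ on its right-hand (larger) side; this term is not controlled by $\Dt^k$ (the weighted average of the previous epoch's $\y$'s does not upper-bound its last iterate), so it cannot be discarded and must be carried along, as in the paper's three-term combination $\frac{1}{2\tau_1}\Dt^k\sum_{t=0}^{m-1}\theta^t+\frac{1/2-\tau_1}{\tau_1}D_{km}+\frac{1}{2\eta}\|\zeta_{km}-\x\|^2$. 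Relatedly, the paper's Jensen-based treatment uses no sign information on the gaps, so its bound holds for an arbitrary comparator $\x$ directly, whereas your ``drop the nonnegative interior gaps'' shortcut needs $\x=\x^*$, and the proposed patch of ``retaining them on the larger side'' does not work: those terms sit on the smaller side of the inequality, so for general $\x$ they may be negative and cannot be dropped.
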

The proof of above theorem is deferred to Appendix~\ref{app:thm katyusha0}.

\begin{proof}{[of Theorem~\ref{thm:katyusha}]}
    Given Thoerem~\ref{thm:katyusha0}, our analysis is divided into several parts. First, we verify the value of $K$ is a valid one. Then, we apply the above theorem to show the convergence for solving each constructed function $f_s$. Then, we prove the convergence of $\|\nabla\phi_\gamma(\x_{\tau+1})\|$, followed by the convergence analysis of $L\|\x_{\tau+1} - \z_{\tau+1}\|$. Then, we briefly prove Theorem~\ref{cor:katyusha}. Finally, we derive the gradient complexity. 
\paragraph{Validation of $K$:}
Overall, we need 
\begin{align*}
\theta^{-mK}\leq  \min\left\{\frac{\mu}{24\hat L}, \frac{\mu^3}{2\hat L^3}, \frac{\mu^2}{8L^2s}\right\}.
\end{align*}
Define $D_{s} = \max\{24\hat L/\mu,  2\hat L^3/\mu^3, 8L^2s/\mu^2\}\geq 16$. 
We can set $K = \lceil\frac{\log (D_{\max})}{m\log \theta}\rceil$. Then, 

\begin{align*}
    K \geq  \bigg\lceil\frac{4}{m\log\theta}\bigg\rceil\geq 1,
\end{align*}
where the last inequality follows that  $2/\theta\geq 2\tau_1+2/\theta-1\geq 1$ always hold according to the setting of $\tau_1 = \min\{\sqrt{\frac{n\mu}{3\Lh}},\frac{1}{2}\}$ and $\eta = \frac{1}{3\tau_1\Lh}$.

\paragraph{Convergence of $\|\nabla\phi_\gamma(\cdot)\|$.} 
Let $\z_s = \arg\min_{\x}f_s(\x)$ and $\x_*$ denote the global minimum of $\min_{\x}\phi(\x)$. It is notable that $\|\x_{s-1} - \z_s\|/\gamma = \nabla\phi_\gamma(\x_{s-1})$.  Below, we will use $K$ to denote $K_s$. $\E_s$ denotes the expectation over randomness in the $s$-th stage conditioned on all previous stages. 
Applying Theorem~\ref{thm:katyusha0} to the $s$-th call of Katyusha, we have
\begin{align}\label{eqn:k2}
\E_s[f_s(\x_s) - f_s(\z_s)] \leq& 4\theta^{-mK}(f_s(\x_{s-1}) - f_s(\z_s)) \nonumber \\
&+ \frac{2\tau_1\theta^{-mK}}{\eta}\|\x_{s-1} - \z_s\|^2.
\end{align}
It is easy to see that 
\begin{align*}
&f_s(\x_{s-1}) - f_s(\z_s)) \\
= &\phi(\x_{s-1}) - \phi(\z_s) - \frac{1}{2\gamma}\|\x_{s-1} -\z_s\|^2\nonumber\\
\leq&  \phi(\x_{s-1}) - \phi(\x_*) - \frac{1}{2\gamma}\|\x_{s-1} -\z_s\|^2.
\end{align*}
Thus, we have
\begin{align*}&\E_s[f_s(\x_s) - f_s(\z_s)] \nonumber\\
\leq&  4\theta^{-mK}(\phi(\x_{s-1}) - \phi(\x_*)) + \frac{2\tau_1\theta^{-mK}}{\eta}\|\x_{s-1} - \z_s\|^2\nonumber\\
\leq& \underbrace{4\theta^{-mK}(\phi(\x_{s-1}) - \phi(\x_*)) + 2\theta^{-mK}\hat L\|\x_{s-1} - \z_s\|^2}\limits_{\mathcal E_s}.
\end{align*}
Based on the above result and by utilizing the strong convexity of $f_s$ and simple algebra, we have the following result whose proof is in Appendix~\ref{app:lem1}. 
\begin{lemma}\label{lem:1}
Let $\Delta_s = \phi(\x_{s-1}) - \phi(\x_s)$ and $\theta^{-mK}\leq \mu/(24\hat L)$. Then we have that 
\begin{align*}
\frac{1}{8\gamma}\|\x_{s-1}-\z_s\|^2\leq \E_{s}[\Delta_s] + 12\theta^{-mK}(\phi(\x_{s-1}) - \phi(\x_*)).
\end{align*} 
\end{lemma}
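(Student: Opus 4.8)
The plan is to convert the one-call guarantee $\E_s[f_s(\x_s)-f_s(\z_s)]\le\mathcal E_s$ established just above into a lower bound on the expected per-stage decrease $\E_s[\Delta_s]$ that is quadratic in $\|\x_{s-1}-\z_s\|$, and then rearrange. The structural fact I would invoke first is that $f_s$ is $\mu$-strongly convex: in the decomposition $f_s=\frac1n\sum_i\hat f_i+\hat\psi$ used above, every $\hat f_i$ is convex while $\hat\psi$ is $(\gamma^{-1}-\mu)=\mu$-strongly convex since $\gamma=1/(2\mu)$. Consequently, because $\z_s$ minimizes $f_s$, strong convexity yields $f_s(\x_{s-1})-f_s(\z_s)\ge\tfrac{\mu}{2}\|\x_{s-1}-\z_s\|^2$; in addition $f_s(\x_{s-1})=\phi(\x_{s-1})$ (the added quadratic vanishes at $\x_{s-1}$) and $f_s(\x_s)\ge\phi(\x_s)$ trivially.

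Chaining these three observations gives $\Delta_s=\phi(\x_{s-1})-\phi(\x_s)\ge\big(f_s(\x_{s-1})-f_s(\z_s)\big)-\big(f_s(\x_s)-f_s(\z_s)\big)\ge\tfrac{\mu}{2}\|\x_{s-1}-\z_s\|^2-\big(f_s(\x_s)-f_s(\z_s)\big)$. I would then take $\E_s$ of both sides, noting that conditioned on the first $s-1$ stages the point $\x_{s-1}$, and hence $\z_s$ and $\|\x_{s-1}-\z_s\|$, are deterministic, so only $\x_s$ contributes randomness; substituting $\E_s[f_s(\x_s)-f_s(\z_s)]\le\mathcal E_s=4\theta^{-mK}(\phi(\x_{s-1})-\phi(\x_*))+2\theta^{-mK}\hat L\|\x_{s-1}-\z_s\|^2$ then produces $\tfrac{\mu}{2}\|\x_{s-1}-\z_s\|^2\le\E_s[\Delta_s]+4\theta^{-mK}(\phi(\x_{s-1})-\phi(\x_*))+2\theta^{-mK}\hat L\|\x_{s-1}-\z_s\|^2$.

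The remaining step is purely a matter of clearing constants. The hypothesis $\theta^{-mK}\le\mu/(24\hat L)$ forces the coefficient $2\theta^{-mK}\hat L\le\mu/12$, so moving that term to the left leaves $\tfrac{5\mu}{12}\|\x_{s-1}-\z_s\|^2$, which dominates $\tfrac{1}{8\gamma}\|\x_{s-1}-\z_s\|^2=\tfrac{\mu}{4}\|\x_{s-1}-\z_s\|^2$; finally enlarging the coefficient $4$ on the residual term to $12$ yields the claimed inequality. I do not anticipate a real obstacle; the only points that need care are correctly identifying that $\|\x_{s-1}-\z_s\|$ is non-random under $\E_s$ so that it may be legitimately absorbed into the left-hand side, and checking that the threshold $\mu/(24\hat L)$ is small enough to keep the residual coefficient of $\|\x_{s-1}-\z_s\|^2$ strictly positive after the rearrangement.
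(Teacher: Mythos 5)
Your proof is correct, but it follows a genuinely different route from the paper's. You invoke the $\sigma=\mu$ strong convexity of $f_s$ as quadratic growth at the minimizer evaluated at the \emph{starting point}, $f_s(\x_{s-1})-f_s(\z_s)\geq \tfrac{\mu}{2}\|\x_{s-1}-\z_s\|^2$, combine it with $f_s(\x_{s-1})=\phi(\x_{s-1})$ and $f_s(\x_s)\geq\phi(\x_s)$, and then substitute the one-call bound $\E_s[f_s(\x_s)-f_s(\z_s)]\leq \mathcal E_s$; the absorption of the $2\theta^{-mK}\hat L\|\x_{s-1}-\z_s\|^2$ term is legitimate since $\x_{s-1}$ and hence $\z_s$ are deterministic under $\E_s$, and your constant bookkeeping checks out ($\tfrac{\mu}{2}-\tfrac{\mu}{12}=\tfrac{5\mu}{12}\geq\tfrac{1}{8\gamma}=\tfrac{\mu}{4}$, and enlarging $4$ to $12$ is harmless because $\phi(\x_{s-1})-\phi(\x_*)\geq 0$). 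The paper instead keeps the proximal term $\tfrac{1}{2\gamma}\|\x_s-\x_{s-1}\|^2$ explicitly in $f_s(\x_s)$, splits it by Young's inequality into $\|\x_{s-1}-\z_s\|^2$ and $-\|\x_s-\z_s\|^2$ contributions (with $\alpha_s=1/2$), and then controls the unwanted $\|\x_s-\z_s\|^2$ term by applying quadratic growth at the \emph{output} $\x_s$, i.e. $\|\x_s-\z_s\|^2\leq\tfrac{2}{\sigma}\E_s[f_s(\x_s)-f_s(\z_s)]\leq\tfrac{2}{\sigma}\mathcal E_s$. Your argument is shorter, avoids Young's inequality altogether, and in fact yields a slightly stronger inequality (coefficient $4$ rather than $12$ on the residual, and $\tfrac{5\mu}{12}$ rather than $\tfrac{\mu}{4}$ on the left), which you then relax to match the statement; the paper's route is what you get if you insist on exploiting the decrease of the proximal objective $\phi(\x_s)+\tfrac{1}{2\gamma}\|\x_s-\x_{s-1}\|^2$, a quantity it reuses elsewhere in the analysis. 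Either way the lemma holds with the stated constants.
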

It implies that 
\begin{align*}
\|&\nabla\phi_\gamma(\x_{s-1})\|^2 \nonumber\\
\leq& \E_{s}[8\Delta_s/\gamma] + 96\theta^{-mK}(\phi(\x_{s-1}) - \phi(\x_*))/\gamma.
\end{align*} 
Multiplying both sides by $w_s$, we have that
\begin{align*}
  & w_s\E_s[\|\nabla \phi_{\gamma}(\x_{s-1})\|^2]\\
  \leq& \E_s\bigg[8w_s\Delta_{s}/\gamma  + 96\theta^{-mK}w_s(\phi(\x_{s-1}) - \phi(\x_*))/\gamma\bigg].
    \end{align*}
 By summing over $s=1,\ldots,S+1$, we have
 \begin{align*}
&\E[ \sum_{s=1}^{S+1}w_s \|\nabla \phi_{\gamma}(\x_{s-1})\|^2]\\
\leq& \E\bigg[\frac{8}{\gamma}\sum_{s=1}^{S+1}w_s\Delta_{s} + \frac{96}{\gamma}\sum_{s=1}^{S+1}w_s\theta^{-mK}(\phi(\x_{s-1}) - \phi(\x_*)) \bigg].
   \end{align*}
Taking the expectation w.r.t. $\tau\in \{0,\ldots, S\}$, we have that
\begin{align*}
   \E[\|\nabla\phi_{\gamma}(\x_{\tau})\|&^2]] \leq \E\bigg[\frac{8\sum_{s=1}^{S+1}w_s\Delta_s}{\gamma\sum_{s=1}^{S+1}w_s} \\
   &+ \frac{96\sum_{s=1}^{S+1}w_s\theta^{-mK}(\phi(\x_{s-1}) - \phi(\x_*))}{\gamma\sum_{s=1}^{S+1}w_s}\bigg].
\end{align*}
Next, we bound the numerators of the two terms in the above bound. For the first term in the above bound, we use Lemma~\ref{lem:2} in the Appendix~\ref{app:lem2} and have 
$\E\left[\sum_{s=1}^{S+1} w_s\Delta_s \right]\leq  \Delta_\phi w_{S+1}$. We can bound the second term as following:
\begin{align*}
   &\E\bigg[ \sum_{s=1}^{S+1}w_s\theta^{-mK}(\phi(\x_{s-1}) - \phi(\x_*))\bigg]\\
   \leq&\sum_{s=1}^{S+1}w_s\theta^{-mK}\E[\phi(\x_{s-1}) - \phi(\x_*)]
   \leq \Delta_\phi\sum_{s=1}^{S+1}w_s\theta^{-mK},
\end{align*}
where we use the fact $\E[\phi(\x_s)]\leq \E[\phi(\x_{s-1})]$ as shown in the proof of Lemma~\ref{lem:2}. 
As a result, 
\begin{align*}
   &\E[\|\nabla\phi_{\gamma}(\x_{\tau})\|^2]] \\
   \leq& \bigg[\frac{8\Delta_\phi w_{S+1}}{\gamma\sum_{s=1}^{S+1}w_s} + \frac{96\Delta_\phi\sum_{s=1}^{S+1}w_s\theta^{-mK}}{\gamma\sum_{s=1}^{S+1}w_s}\bigg]\\
   \leq& \bigg[\frac{8\Delta_\phi w_{S+1}}{\gamma\sum_{s=1}^{S+1}w_s} + \frac{12\Delta_\phi\sum_{s=1}^{S+1}w_ss^{-1}}{\gamma\sum_{s=1}^{S+1}w_s}\bigg],
\end{align*}
where we use the fact $\theta^{-mK}\leq 1/(8s)$.  Then by simple algebra (cf.~\citep{chen18stagewise}), we have 
\[
\E[\|\nabla\phi_\gamma(\x_{\tau})\|^2]\leq \frac{16\mu\Delta_\phi (\alpha+1)}{S+1} + \frac{24\mu\Delta_\phi (\alpha+1)}{(S+1)\alpha^{\mathbb I_{\alpha<1}}}.
\]
Due to the objective decreasing property, we have
\begin{align*}
\E[\phi(\x_s) + \frac{1}{2\gamma}\|\x_s - \x_{s-1}\|^2 - \phi(\x_{s-1})]\leq 0,
\end{align*}
which implies by a similar analysis
\begin{align*}
\frac{1}{2\gamma}\E[\|\x_{\tau+1} - \x_{\tau}\|^2]\leq  \frac{\Delta_\phi(\alpha+1)}{S+1}.
\end{align*}
Since $\phi_\gamma(\x)$ has  $(\gamma^{-1} - \mu)$-Lipschitz continuous gradient (cf. Lemma 2.1 in~\citep{Drusvyatskiy2018}), then we have
\begin{align*}
&\E[\|\nabla\phi_\gamma(\x_{\tau+1})\|^2]\\
\leq& 2\E[\|\nabla\phi_\gamma(\x_{\tau})\|^2] + 2(\gamma^{-1} - \mu)^2\E[\|\x_{\tau+1} - \x_\tau\|^2]\\
\leq&  2\E[\|\nabla\phi_\gamma(\x_{\tau})\|^2] + \frac{2\mu\Delta_\phi(\alpha+1)}{S+1}\\
\leq&  \frac{34\mu\Delta_\phi (\alpha+1)}{S+1}  + \frac{48\mu\Delta_\phi (\alpha+1)}{(S+1)\alpha^{\mathbb I_{\alpha<1}}}.
\end{align*}

\paragraph{Convergence of $L\|\x_{\tau+1} - \z_{\tau+1}\|$.} By the strong convexity of $f_s$, we have $\E[\|\x_{s} - \z_s\|^2] \leq \frac{2}{\sigma}\mathcal E_s$. 
To proceed,   we have
\begin{align*}
&L^2\|\x_{s} - \z_{s}\|^2\leq \frac{2L^2}{\sigma}(4\theta^{-mK}(\phi(\x_{s-1}) - \phi(\x_*))\\
& +2\theta^{-mK}\hat L\|\x_{s-1} - \z_{s}\|^2 )\\
\leq& \frac{8L^2\theta^{-mK}}{\sigma}(\phi(\x_{s-1}) - \phi(\x_*)) \\
& +\frac{2\hat L^3\theta^{-mK}}{\mu^3}\|\nabla\phi_\gamma(\x_{s-1})\|^2\\
\leq& \frac{8L^2\theta^{-mK}}{\sigma}(\phi(\x_{s-1}) - \phi(\x_*))  + \|\nabla\phi_\gamma(\x_{s-1})\|^2,
\end{align*}
where we use the fact $\|\x_{s-1} - \z_s\|/\gamma = \|\nabla\phi_\gamma(\x_{s-1})\|$ and $\theta^{-mK}\leq \mu^3/(2\hat L^3)$. 
Then following the same analysis as above, 
\begin{align*}
&\E[L^2\|\x_{\tau+1} - \z_{\tau+1}\|^2]\\
\leq& \frac{8L^2\Delta_\phi\sum_{s=1}^{S+1}w_s\theta^{-mK}}{\sigma\sum_{s=1}^{S+1}w_s} +\E[\|\nabla\phi_\gamma(\x_{\tau})|^2].
\end{align*}
Since $\theta^{-mK}\leq \mu^2/(8L^2s)$, then
\begin{align*}
\E[L^2\|\x_{\tau+1} - \z_{\tau+1}\|^2]\leq& \frac{16\mu\Delta_\phi(\alpha+1)}{S+1}\\
& + \frac{25\mu \Delta_\phi (\alpha+1)}{(S+1)\alpha^{\I_{\alpha<1}}}.
\end{align*}
When  $\psi(\cdot)=0$ and considering $\alpha$ as a constant, we have
\begin{align*}
&\E[\|\nabla \phi(\x_{\tau+1})\|^2]\\
\leq & \E[\|\nabla \phi(\x_{\tau+1})-\nabla \phi(\z_{\tau+1})+\nabla \phi(\z_{\tau+1})\|^2] \\
\leq& \E[2L^2\|\x_{\tau+1}-\z_{\tau+1}\|^2+2\|\nabla\phi_\gamma(\x_{\tau})\|^2]\\
\leq & O\left(\frac{\mu\Delta_\phi}{S+1}\right).
\end{align*}
Indeed,  for $\psi(\cdot)=0$, we can do slightly better by bounding $f_s(\x_{s-1}) - f_s(\z_s)\leq \frac{\hat L}{2}\|\x_{s-1} - \z_s\|^2$. Then $\mathcal E_s$ becomes $2\theta^{-mK}\hat L\|\x_{s-1} - \z_s\|^2$ and $\theta^{-mK}(\phi(\x_{s-1}) - \phi(\z_s))$ in the proceeding analysis is gone, which removes the requirement $\theta^{-mK}\leq \mu^2/(16L^2s)$. As a result, we can set $K = \lceil \log(D)/(m\log\theta) \rceil$, where $D = \max(48L/\mu, 2\hat L^3/\mu^3)$.


\paragraph{Gradient Complexity: } Finally, we analyze the gradient complexity. Let us consider the gradient complexity at the $s$-th stage, which is 
{\begin{align*}
(n+m)K \leq& \frac{2\log(D_s)}{\log(2\tau_1+2\tau_1\eta\mu+1-\eta\mu)}n \\
&+ \frac{2\log(D_s)}{\log(1+\eta\mu)}.
\end{align*}}
Let $\tau_1 = \frac{c}{\eta\mu}$, where $0\leq c = \frac{\mu}{3\Lh}\leq \frac{1}{3}$. We have that
\begin{align*}
    &(n+m)K = nK+mK \\
    =& \frac{2\log(D_s)}{\log(2\tau_1+2\tau_1\eta\mu+1-\eta\mu)}n + \frac{2\log(D_s)}{\log(1+\eta\mu)}\\
    \leq& \frac{2\log(D_s)}{\log( 2\tau_1+2c+1-\frac{c}{\tau_1})}n + \frac{2\log(D_s)}{\log(1+\frac{c}{\tau_1})}.
\end{align*}
We analyze  two cases. 

\textbf{Case 1:} If $n\geq \frac{3\Lh}{4\mu}$, then $\tau_1=\frac{1}{2}$, we have that
\begin{align*}
    (n+m)K &\leq O\bigg(\log(D_s)n + \frac{\log D_s}{\log(1+2c)}\bigg).
\end{align*}
{
Since $0\leq c\leq 1/3$ so $\log(1+2c)\geq c$, then 
\begin{align*}
    (n+m)K &\leq O\bigg((n+\frac{\Lh}{\mu})\log D_s\bigg).
\end{align*}
Then the total gradient complexity for finding $\E\|\nabla\phi_\gamma(\x_\tau)\|^2\leq \epsilon^2$ is $ O((\mu n+ L)\log(L/(\mu\epsilon)).$
}
\begin{figure*}[ht]
\begin{center}

    {\includegraphics[scale=.21]{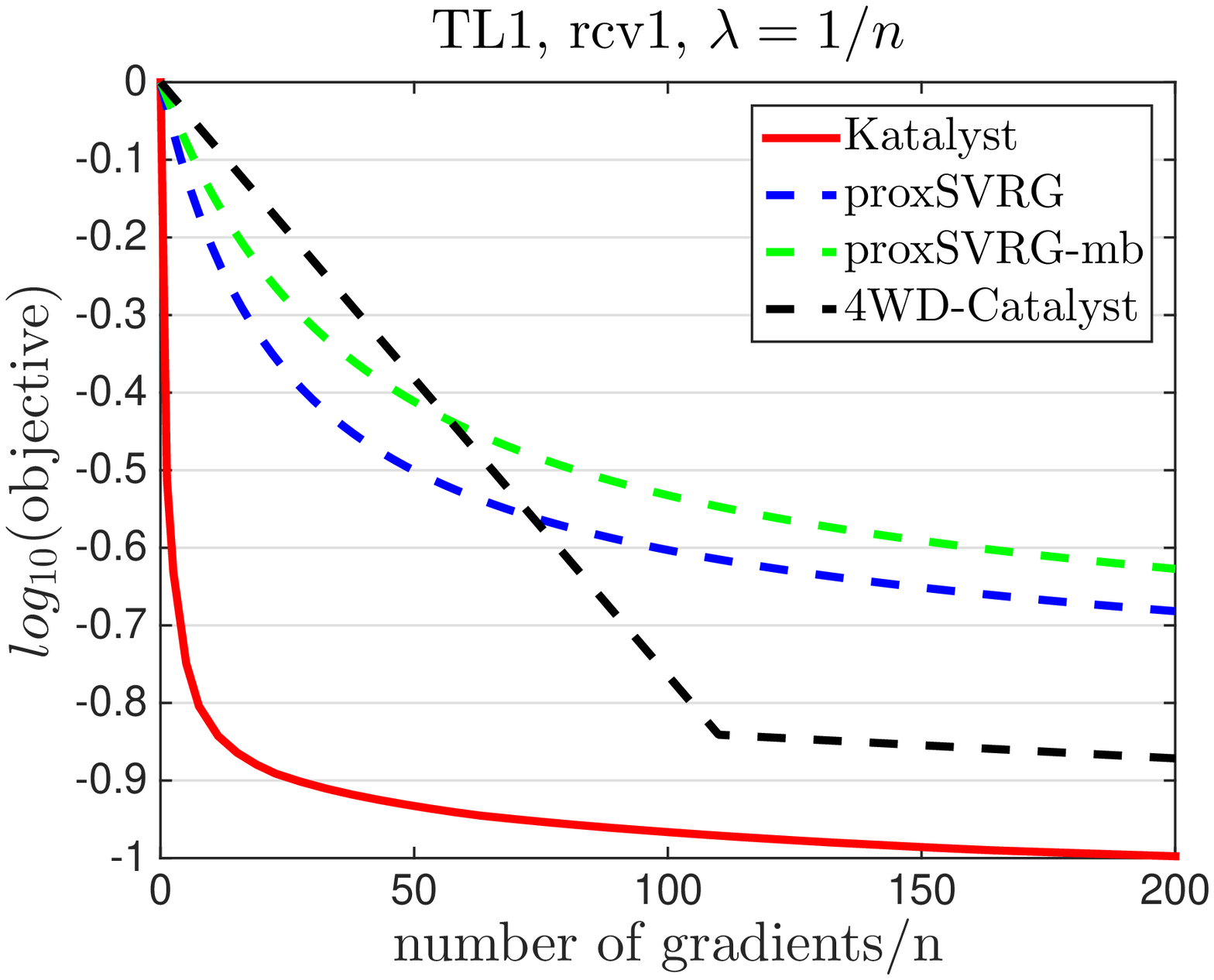}}
    {\includegraphics[scale=.21]{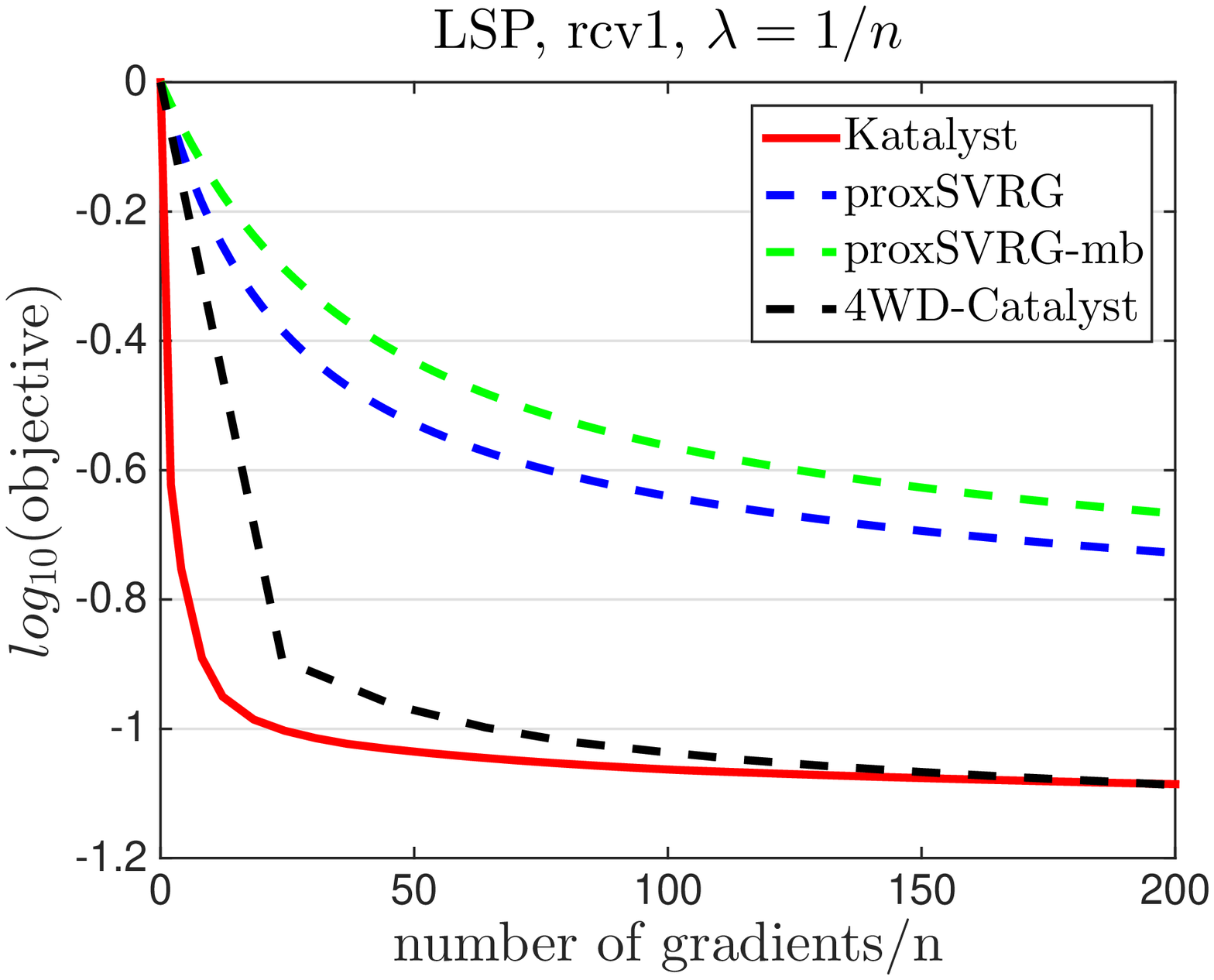}}
    {\includegraphics[scale=.21]{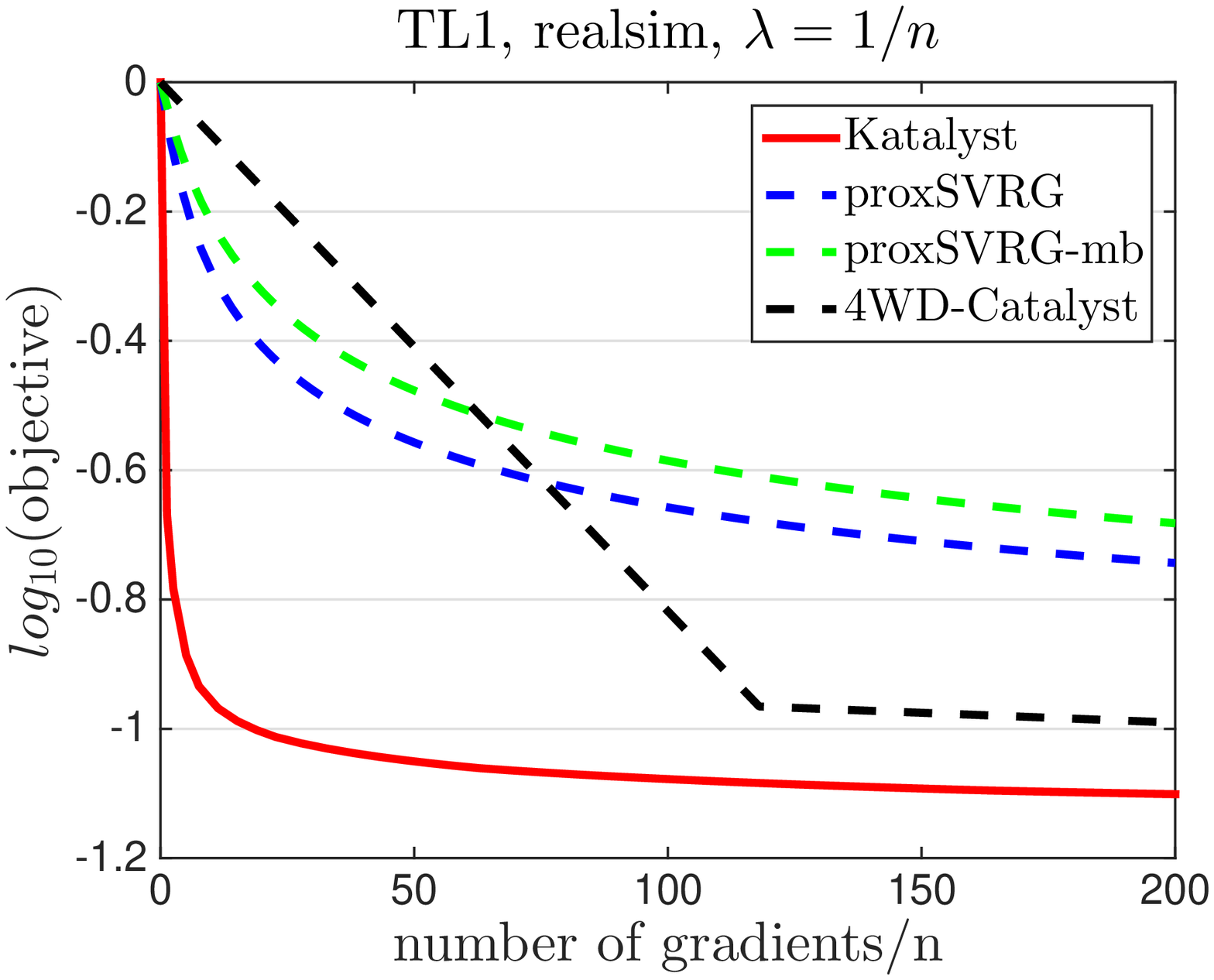}}
    {\includegraphics[scale=.21]{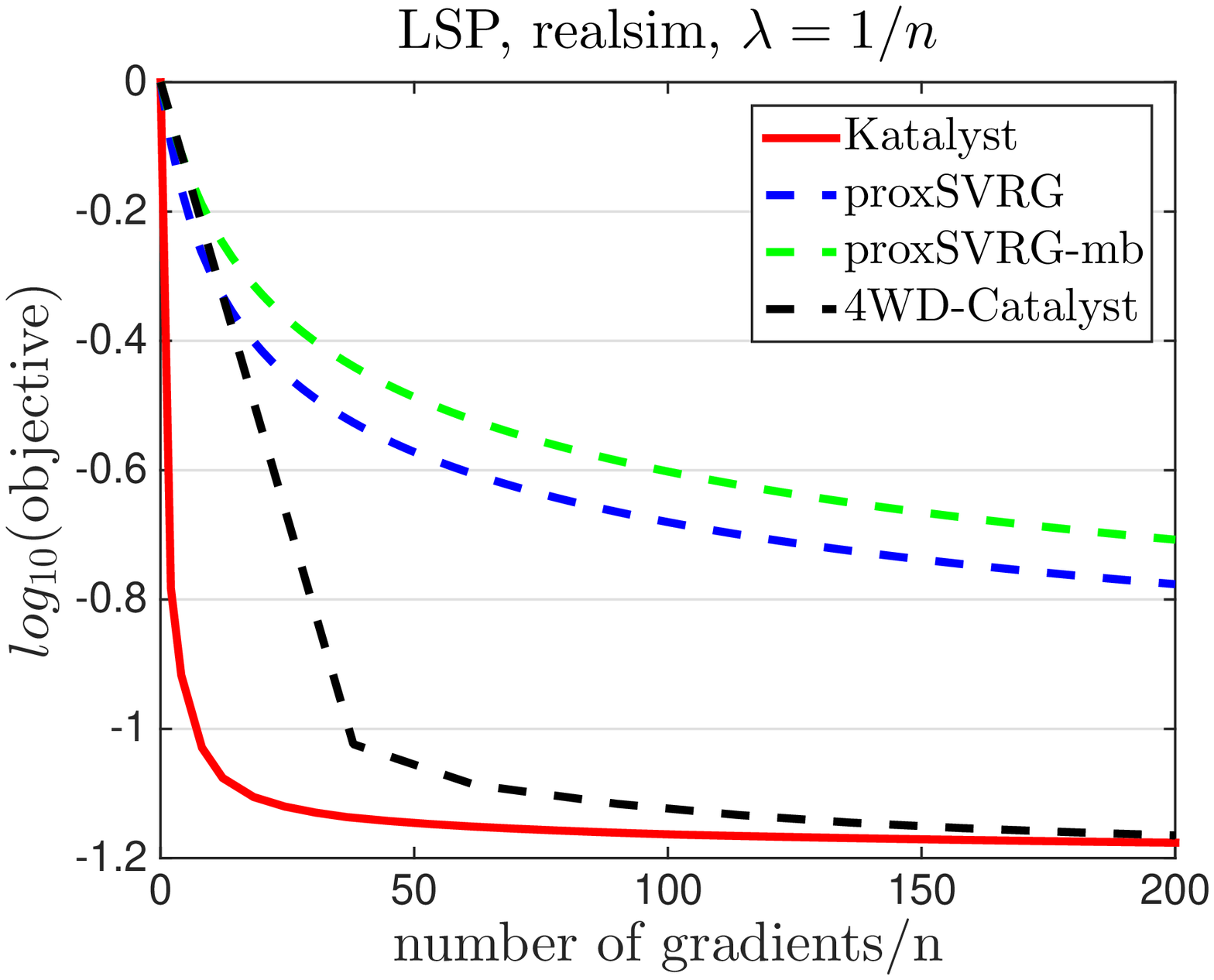}}
    {\includegraphics[scale=.21]{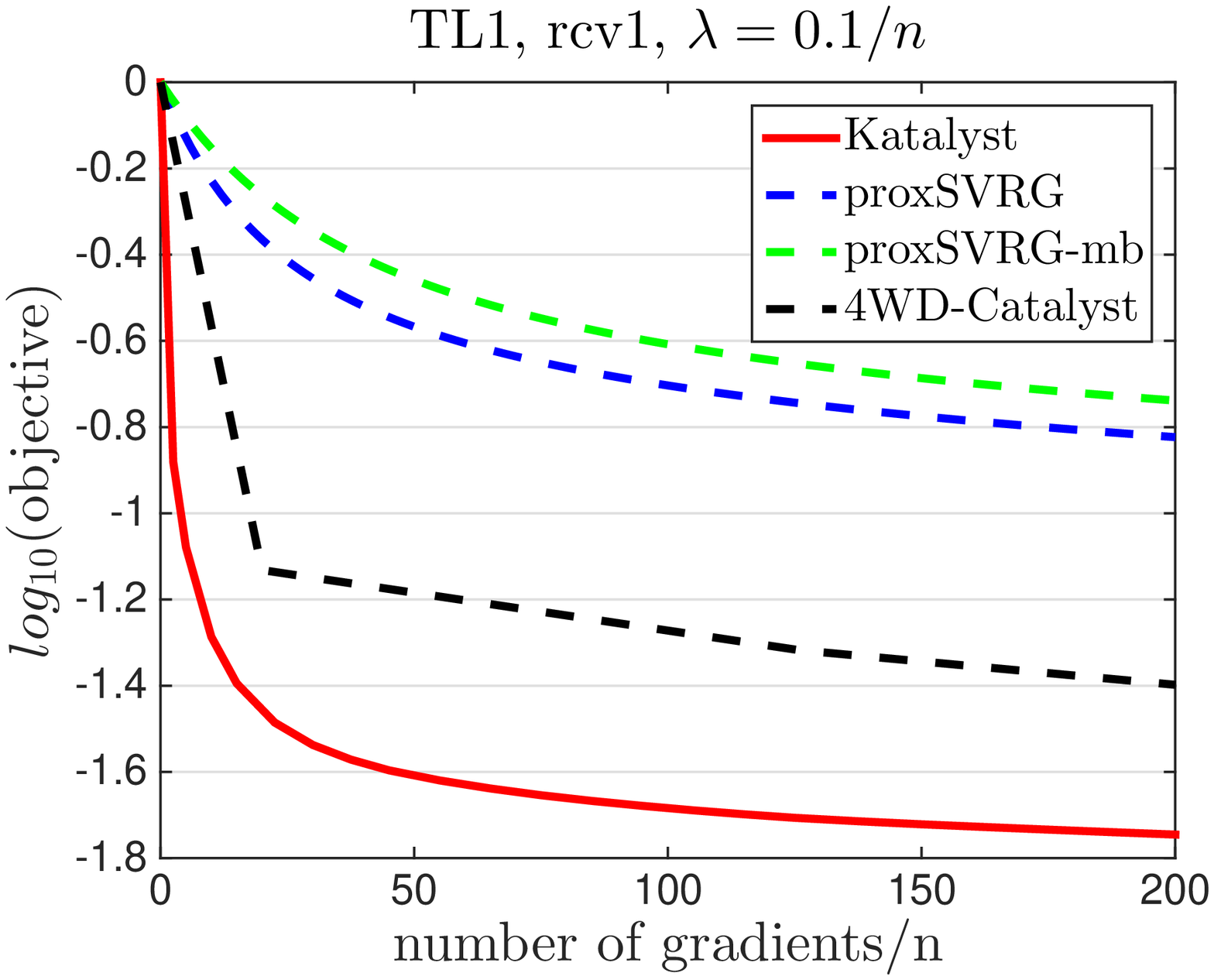}}
    {\includegraphics[scale=.21]{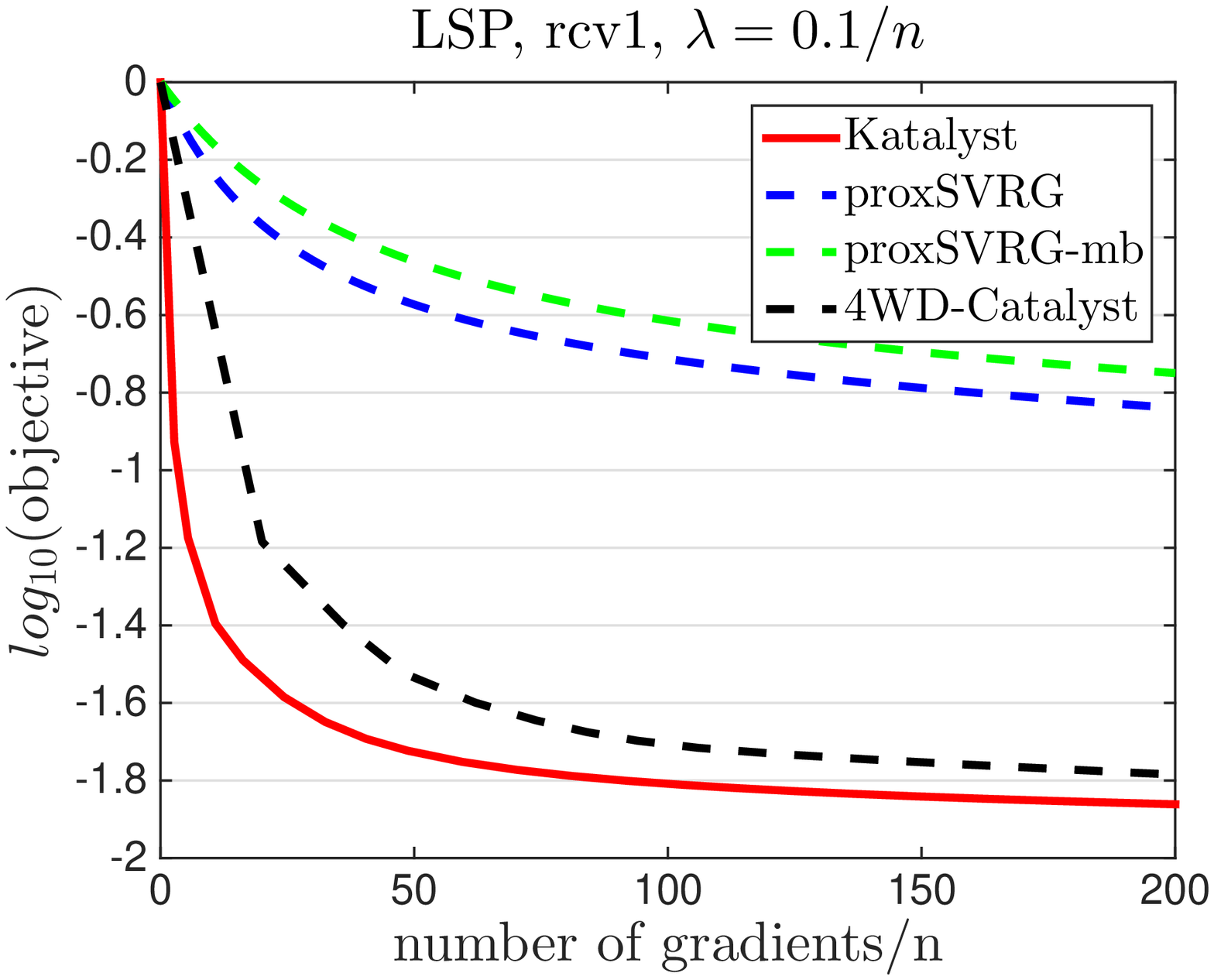}}
    {\includegraphics[scale=.21]{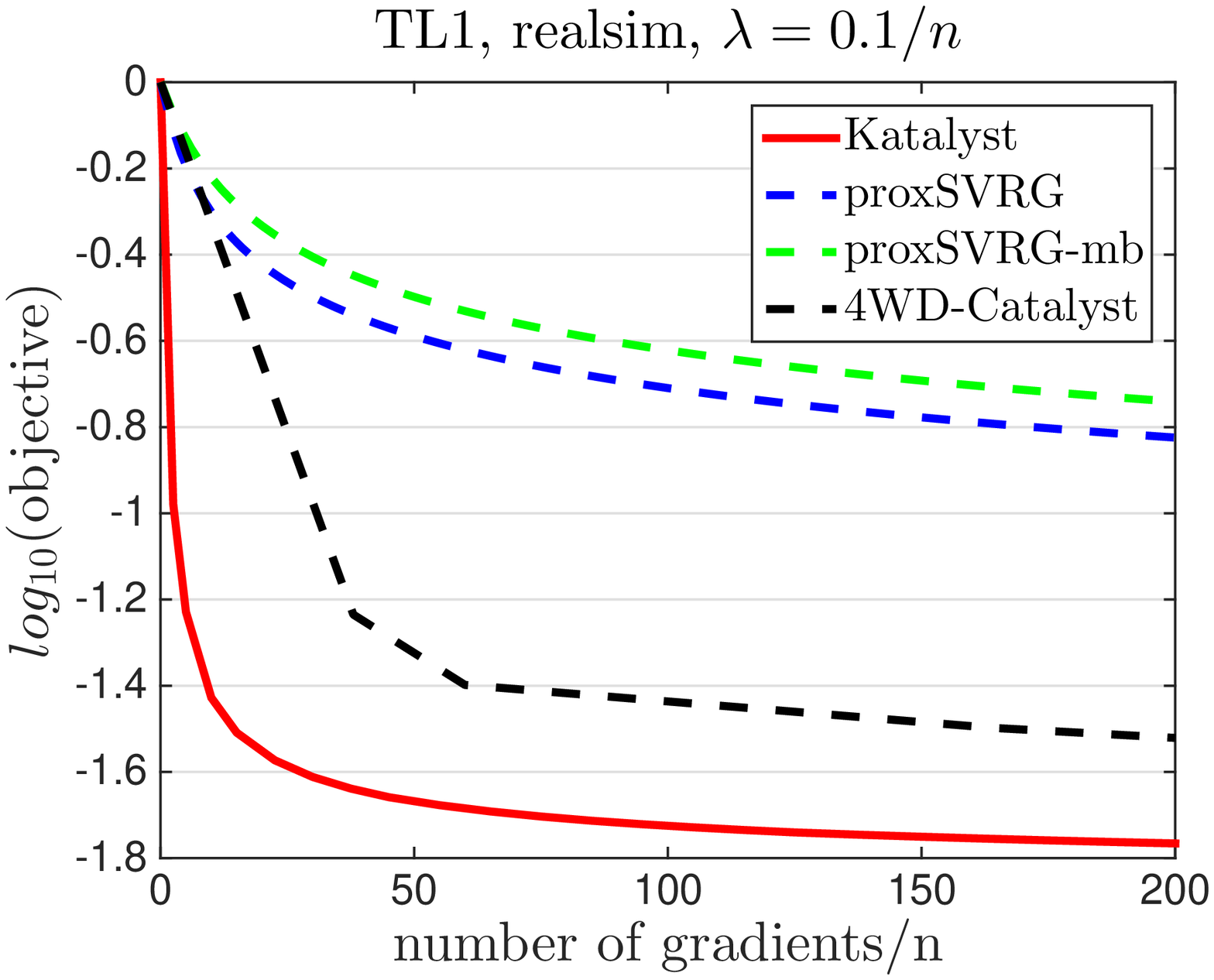}}
    {\includegraphics[scale=.21]{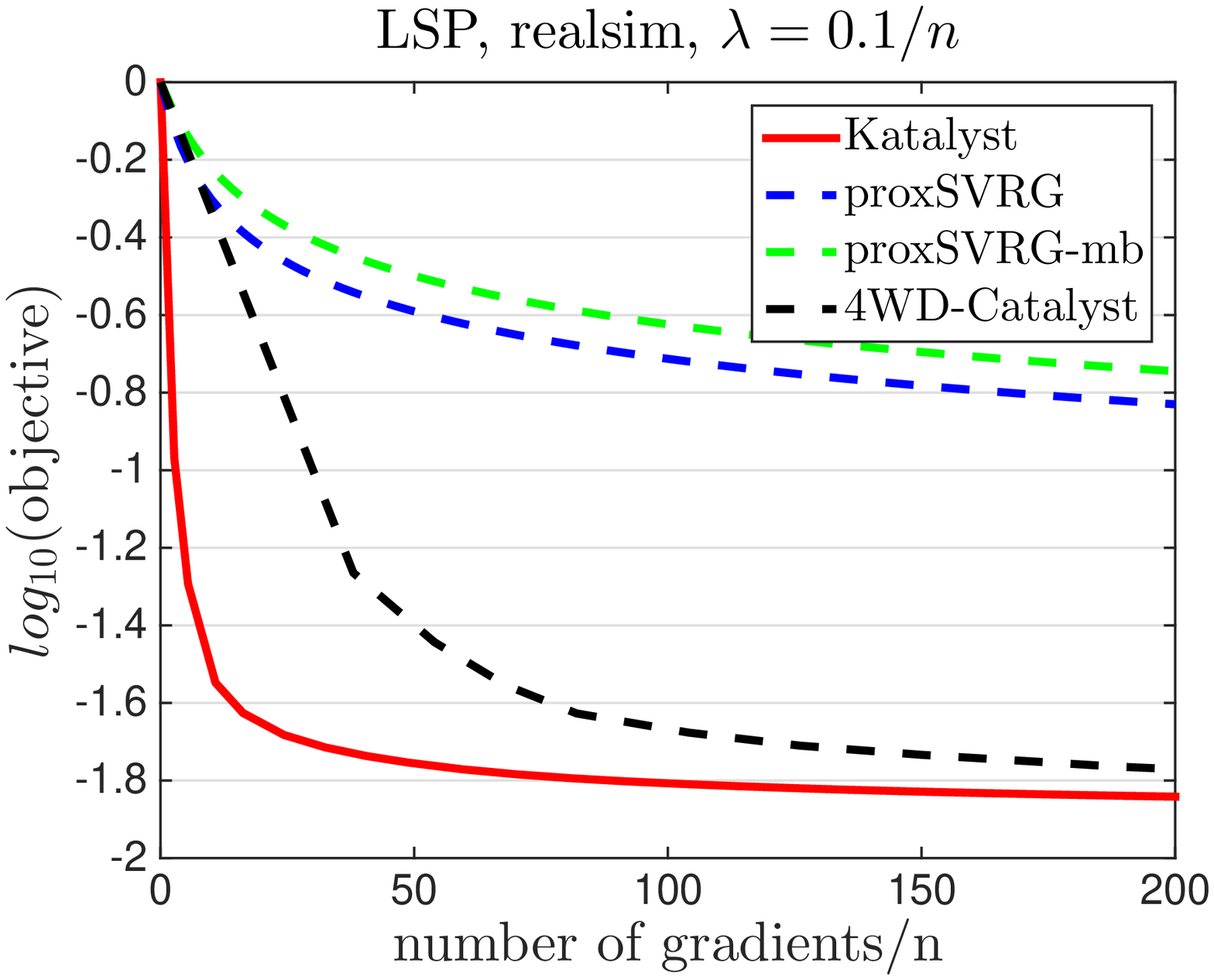}}

    \caption{Comparison of different algorithms for two tasks on different datasets}
    \label{fig1}
    \vspace*{-0.1in}
\end{center}
\end{figure*}

 \textbf{Case 2:} If $n\leq \frac{3\Lh}{4\mu}$,
then  $\tau_1 = \sqrt{\frac{n\mu}{3\Lh}}\in(0,\frac{1}{2}]$. We have that following inequalities hold
\begin{align*}
\frac{\tau_1}{c} = \sqrt{\frac{3n\Lh}{\mu}}&\leq \frac{3\Lh}{2\mu}, \quad\log(\frac{\tau_1+c}{c})\leq  \log \frac{3\Lh}{\mu},\\
& \log(1+ c/\tau_1)\geq  c/(2\tau_1)
\end{align*}
and due to 
\item if $2\tau_1+2c-\frac{c}{\tau_1}\leq 1/2$, then
\begin{align*}
    \log(2\tau_1+2c+1-\frac{c}{\tau_1})\geq \tau_1 + c - c/(2\tau_1),
\end{align*}
\item if $2\tau_1+2c-\frac{c}{\tau_1}\geq  1/2$, then
\begin{align*}
    \log(2\tau_1+2c+1-\frac{c}{\tau_1})\geq \log(1.5),
\end{align*}
 we have 
 \begin{align*}
 &\frac{1}{\log(2\tau_1+2c+1-\frac{c}{\tau_1})}\\
 \leq& \max\bigg\{\frac{1}{\log(1.5)}, \frac{1}{\sqrt{\frac{n\mu}{3\Lh}}+\frac{\mu}{3\Lh}-\sqrt{\frac{\mu}{12n\Lh}}}\bigg\}\\
 \leq& O\bigg(\sqrt{\frac{\Lh}{n\mu}}\bigg).
 \end{align*}
Thus we have 
\begin{align*}
(n+m)K\leq O\bigg(\sqrt{\frac{n\Lh}{\mu}}\log\frac{\Lh}{\mu}\bigg),
\end{align*}
and the total gradient complexity for finding $\E\|\nabla\phi_\gamma(\x_\tau)\|^2\leq \epsilon^2$ is $ O(\sqrt{\mu n L }\log(L/(\mu\epsilon)).$

\end{proof}

\section{Experiments}
In this section, we conduct some experements for solving regularized classification problem in the form of~(\ref{eqn:ncsr}) with  $\ell(\x;\a_i,b_i) = \frac{1}{2}(\max(0,1-b_i\a_i^\top\x))^2$ being a squared hinge loss that is more suitable for classification. 


\textbf{Penalties and Parameters.} We choose two different non-convex and non-smooth penalty functions as the regularizers, namely log-sum penalty (LSP) $R(\x) =\sum_{i=1}^d\log(\beta + |x_i|), (\beta > 0)$ and transformed $\ell_1$ (TL1) penalty $R(\x) = \sum_{i=1}^d\frac{(\beta +1) |x_i|}{\beta + |x_i|}, (\beta > 0)$ where $\beta>0$ is a parameter. 
Both LSP and TL1 can be written as a difference of convex functions: $R(\x) =   r_1(\x) - r_2(\x)$, 
where $r_1(\x)$ is a scaled $\ell_1$ norm and $r_2(x)$ is smooth and convex (cf. details provided in Appendix~\ref{app:dec}). 
Then the problem becomes 
\begin{align*}
\min_{\x\in\R^d} \phi(\x)  := \frac{1}{n}\sum_{i=1}^{n}  \underbrace{(\ell(\x;\a_i, b_i)  - r_2(\x)) }\limits_{f_i(\x)} +    \underbrace{r_1(\x)}\limits_{\Psi(\x)} ,
\end{align*}
For LSP, it is easy to show that the weakly convexity parameter and smoothness parameter of $f_i(\x)$ are given by $\mu = \frac{\lambda}{\beta^2}~~\text{and}~~\hat L = \frac{\lambda }{\beta^2} + \max_{1\leq i \leq d}\|\a_i\|^2$. 
For TL1, it is easy to show that the weakly convexity parameter and smoothness parameter of $\hat f(\x)$ are given by 
$\mu = \frac{2(\beta+1)\lambda}{\beta^2}~~\text{and}~~\hat L =  \frac{2(\beta+1)\lambda}{\beta^2} + \max_{1\leq i \leq d}\|\a_i\|^2$.  We fix $\beta = 1$ but set two different values of  $\lambda \in \{ \frac{1}{n}, \frac{0.1}{n}\}$. 
The experiments are performed on two data sets from libsvm website \citep{CC01a}, namely rcv1 ($n=20,242$ and $d = 47,236$) and real-sim ($n = 72,309$ and $d  = 20,958$).

\textbf{Baselines and Settings.} We compare the proposed Katalyst with proxSVRG, its mini-batch variant (named proxSVRG-mb in experiments) \citep{reddi2016proximal} and 4WD-Catalyst \citep{pmlr-v84-paquette18a}. Other algorithms like RapGrad, SPIDER, SNVRG are not applicable to the considered problem. 
Since smoothness parameter $L$ and weak convexity parameter $\mu $ are given as discussed above, we implement Algorithm~1 in \citep{pmlr-v84-paquette18a} for 4WD-Catalyst. 
All parameters in three baselines including step size and the number of iterations for the inner loop  are set to their theoretical values suggested in the original papers.

\textbf{Results.} We report the results in Figure~\ref{fig1}, where the x-axis is $(\text{number of gradients})/n$ and the y-axis is log-scale of the objective value. 
For 4WD-Catalyst, we only plot the result at the end of each stage since it selects the better solution of two sub-problems.
It is worth noting that we do not include the complexity of computing $f_\kappa(\bar{\x}_s;\x_{s-1})$ in solving sub-problem for 4WD-Catalyst, i.e. \citep[eqn. (7) of Algorithm~1]{pmlr-v84-paquette18a}, which would introduce more CPU time in practice.

We can observe that when using a smaller $\lambda$ that gives a smaller value of  $\mu$-convexity parameter,  Katalyst has relatively larger speed-up compared with the two variants of proxSVRG, which supports the presented complexity of Katalyst that is adaptive to the weakly convex property. 
Katalyst is also more efficient than  4WD-Catalyst, which needs to solve two sub-problems at each stage to satisfy a certain criterion that requires many iterations in practice.

\section{Conclusion}
In this paper, we have developed a SVRG-type accelerated stochastic algorithm for solving a family of non-convex optimization problems whose objective consists of a finite-sum of smooth functions and a non-smooth convex function. We proved that the gradient complexity can be improved when the condition number is very large compared to the number of smooth components, which achieves the best complexity among all SVRG-type methods and also matches that of an existing SAGA-type stochastic algorithm. 

\bibliographystyle{icml2019}
\bibliography{all}

\begin{thebibliography}{29}
\providecommand{\natexlab}[1]{#1}
\providecommand{\url}[1]{\texttt{#1}}
\expandafter\ifx\csname urlstyle\endcsname\relax
  \providecommand{\doi}[1]{doi: #1}\else
  \providecommand{\doi}{doi: \begingroup \urlstyle{rm}\Url}\fi

\bibitem[Allen-Zhu(2017{\natexlab{a}})]{DBLP:conf/icml/Allen-Zhu17}
Allen-Zhu, Z.
\newblock Natasha: Faster non-convex stochastic optimization via strongly
  non-convex parameter.
\newblock In \emph{Proceedings of the 34th International Conference on Machine
  Learning (ICML)}, pp.\  89--97, 2017{\natexlab{a}}.

\bibitem[Allen-Zhu(2017{\natexlab{b}})]{DBLP:conf/stoc/Zhu17}
Allen-Zhu, Z.
\newblock Katyusha: the first direct acceleration of stochastic gradient
  methods.
\newblock In \emph{Proceedings of the 49th Annual {ACM} {SIGACT} Symposium on
  Theory of Computing (STOC)}, pp.\  1200--1205, 2017{\natexlab{b}}.

\bibitem[Allen-Zhu(2018)]{natasha1a}
Allen-Zhu, Z.
\newblock Natasha: Faster non-convex stochastic optimization via strongly non-
  convex parameter.
\newblock \emph{CoRR}, /abs/1708.08694/V5, 2018.

\bibitem[Allen-Zhu \& Hazan(2016)Allen-Zhu and Hazan]{DBLP:conf/icml/ZhuH16}
Allen-Zhu, Z. and Hazan, E.
\newblock Variance reduction for faster non-convex optimization.
\newblock In \emph{Proceedings of the 33nd International Conference on Machine
  Learning (ICML)}, pp.\  699--707, 2016.
\newblock URL \url{http://jmlr.org/proceedings/papers/v48/allen-zhua16.html}.

\bibitem[Cand{\`e}s et~al.(2008)Cand{\`e}s, Wakin, and Boyd]{Candades2008}
Cand{\`e}s, E.~J., Wakin, M.~B., and Boyd, S.~P.
\newblock Enhancing sparsity by reweighted l1 minimization.
\newblock \emph{Journal of Fourier Analysis and Applications}, 14\penalty0
  (5):\penalty0 877--905, Dec 2008.

\bibitem[Chang \& Lin(2011)Chang and Lin]{CC01a}
Chang, C.-C. and Lin, C.-J.
\newblock {LIBSVM}: A library for support vector machines.
\newblock \emph{ACM Transactions on Intelligent Systems and Technology},
  2:\penalty0 27:1--27:27, 2011.
\newblock Software available at \url{http://www.csie.ntu.edu.tw/~cjlin/libsvm}.

\bibitem[Chen et~al.(2018)Chen, Yang, Yi, Zhou, and Chen]{chen18stagewise}
Chen, Z., Yang, T., Yi, J., Zhou, B., and Chen, E.
\newblock Universal stagewise learning for non-convex problems with convergence
  on averaged solutions.
\newblock \emph{CoRR}, /abs/1808.06296, 2018.

\bibitem[Davis \& Drusvyatskiy(2018{\natexlab{a}})Davis and
  Drusvyatskiy]{modelweakly18}
Davis, D. and Drusvyatskiy, D.
\newblock Stochastic model-based minimization of weakly convex functions.
\newblock \emph{CoRR}, abs/1803.06523, 2018{\natexlab{a}}.

\bibitem[Davis \& Drusvyatskiy(2018{\natexlab{b}})Davis and
  Drusvyatskiy]{sgdweakly18}
Davis, D. and Drusvyatskiy, D.
\newblock Stochastic subgradient method converges at the rate $o(k^{-1/4})$ on
  weakly convex functions.
\newblock \emph{CoRR}, /abs/1802.02988, 2018{\natexlab{b}}.

\bibitem[Davis \& Grimmer(2017)Davis and Grimmer]{davis2017proximally}
Davis, D. and Grimmer, B.
\newblock Proximally guided stochastic subgradient method for nonsmooth,
  nonconvex problems.
\newblock \emph{arXiv preprint arXiv:1707.03505}, 2017.

\bibitem[Defazio et~al.(2014)Defazio, Bach, and
  Lacoste{-}Julien]{DBLP:conf/nips/DefazioBL14}
Defazio, A., Bach, F.~R., and Lacoste{-}Julien, S.
\newblock {SAGA:} {A} fast incremental gradient method with support for
  non-strongly convex composite objectives.
\newblock In \emph{Advances in Neural Information Processing Systems (NIPS)},
  pp.\  1646--1654, 2014.

\bibitem[Drusvyatskiy \& Paquette(2018)Drusvyatskiy and
  Paquette]{Drusvyatskiy2018}
Drusvyatskiy, D. and Paquette, C.
\newblock Efficiency of minimizing compositions of convex functions and smooth
  maps.
\newblock \emph{Mathematical Programming}, Jul 2018.

\bibitem[Fan \& Li(2001)Fan and Li]{CIS-172933}
Fan, J. and Li, R.
\newblock Variable selection via nonconcave penalized likelihood and its oracle
  properties.
\newblock \emph{Journal of the American Statistical Association}, 96\penalty0
  (456):\penalty0 1348--1360, 2001.

\bibitem[Fang et~al.(2018)Fang, Li, Lin, and Zhang]{DBLP:conf/nips/FangLLZ18}
Fang, C., Li, C.~J., Lin, Z., and Zhang, T.
\newblock {SPIDER:} near-optimal non-convex optimization via stochastic
  path-integrated differential estimator.
\newblock In \emph{NeurIPS}, pp.\  687--697, 2018.

\bibitem[Frostig et~al.(2015)Frostig, Ge, Kakade, and
  Sidford]{DBLP:conf/icml/FrostigGKS15}
Frostig, R., Ge, R., Kakade, S., and Sidford, A.
\newblock Un-regularizing: approximate proximal point and faster stochastic
  algorithms for empirical risk minimization.
\newblock In \emph{{ICML}}, volume~37 of \emph{{JMLR} Workshop and Conference
  Proceedings}, pp.\  2540--2548. JMLR.org, 2015.

\bibitem[Johnson \& Zhang(2013)Johnson and Zhang]{NIPS2013_4937}
Johnson, R. and Zhang, T.
\newblock Accelerating stochastic gradient descent using predictive variance
  reduction.
\newblock In \emph{Advances in Neural Information Processing Systems (NIPS)},
  pp.\  315--323, 2013.

\bibitem[Lan \& Yang(2018)Lan and Yang]{DBLP:journals/corr/abs/1805.05411}
Lan, G. and Yang, Y.
\newblock Accelerated stochastic algorithms for nonconvex finite-sum and
  multi-block optimization.
\newblock \emph{CoRR}, abs/1805.05411, 2018.

\bibitem[Lan \& Zhou(2018)Lan and Zhou]{DBLP:journals/mp/LanZ18}
Lan, G. and Zhou, Y.
\newblock An optimal randomized incremental gradient method.
\newblock \emph{Math. Program.}, 171\penalty0 (1-2):\penalty0 167--215, 2018.

\bibitem[Lin et~al.(2015)Lin, Mairal, and Harchaoui]{lin2015universal}
Lin, H., Mairal, J., and Harchaoui, Z.
\newblock A universal catalyst for first-order optimization.
\newblock In \emph{Advances in Neural Information Processing Systems}, pp.\
  3384--3392, 2015.

\bibitem[Paquette et~al.(2018)Paquette, Lin, Drusvyatskiy, Mairal, and
  Harchaoui]{pmlr-v84-paquette18a}
Paquette, C., Lin, H., Drusvyatskiy, D., Mairal, J., and Harchaoui, Z.
\newblock Catalyst for gradient-based nonconvex optimization.
\newblock In \emph{Proceedings of the Twenty-First International Conference on
  Artificial Intelligence and Statistics}, volume~84, pp.\  613--622, 2018.

\bibitem[Reddi et~al.(2016{\natexlab{a}})Reddi, Hefny, Sra, P\'{o}cz\'{o}s, and
  Smola]{Reddi:2016:SVR:3045390.3045425}
Reddi, S.~J., Hefny, A., Sra, S., P\'{o}cz\'{o}s, B., and Smola, A.
\newblock Stochastic variance reduction for nonconvex optimization.
\newblock In \emph{Proceedings of the 33rd International Conference on
  International Conference on Machine Learning (ICML)}, pp.\  314--323.
  JMLR.org, 2016{\natexlab{a}}.

\bibitem[Reddi et~al.(2016{\natexlab{b}})Reddi, Sra, P{\'{o}}czos, and
  Smola]{DBLP:conf/cdc/ReddiSPS16}
Reddi, S.~J., Sra, S., P{\'{o}}czos, B., and Smola, A.~J.
\newblock Fast incremental method for smooth nonconvex optimization.
\newblock In \emph{55th {IEEE} Conference on Decision and Control (CDC)}, pp.\
  1971--1977, 2016{\natexlab{b}}.

\bibitem[Reddi et~al.(2016{\natexlab{c}})Reddi, Sra, P{\'o}czos, and
  Smola]{reddi2016proximal}
Reddi, S.~J., Sra, S., P{\'o}czos, B., and Smola, A.~J.
\newblock Proximal stochastic methods for nonsmooth nonconvex finite-sum
  optimization.
\newblock In \emph{Advances in Neural Information Processing Systems}, pp.\
  1145--1153, 2016{\natexlab{c}}.

\bibitem[Roux et~al.(2012)Roux, Schmidt, and Bach]{DBLP:conf/nips/RouxSB12}
Roux, N.~L., Schmidt, M.~W., and Bach, F.
\newblock A stochastic gradient method with an exponential convergence rate for
  finite training sets.
\newblock In \emph{Advances in Neural Information Processing Systems (NIPS)},
  pp.\  2672--2680, 2012.

\bibitem[Wen et~al.(2018)Wen, Chen, and Pong]{Wen2018}
Wen, B., Chen, X., and Pong, T.~K.
\newblock A proximal difference-of-convex algorithm with extrapolation.
\newblock \emph{Computational Optimization and Applications}, 69\penalty0
  (2):\penalty0 297--324, Mar 2018.

\bibitem[Zhang(2010)]{cunzhang10}
Zhang, C.-H.
\newblock Nearly unbiased variable selection under minimax concave penalty.
\newblock \emph{The Annals of Statistics}, 38:\penalty0 894 -- 942, 2010.

\bibitem[Zhang et~al.(2013)Zhang, Mahdavi, and Jin]{NIPS2013_4940}
Zhang, L., Mahdavi, M., and Jin, R.
\newblock Linear convergence with condition number independent access of full
  gradients.
\newblock In \emph{Advances in Neural Information Processing Systems (NIPS)},
  pp.\  980--988. 2013.

\bibitem[Zhang \& Xin(2014)Zhang and Xin]{DBLP:journals/corr/ZhangX14}
Zhang, S. and Xin, J.
\newblock Minimization of transformed l{\_}1 penalty: Theory, difference of
  convex function algorithm, and robust application in compressed sensing.
\newblock \emph{CoRR}, abs/1411.5735, 2014.

\bibitem[Zhou et~al.(2018)Zhou, Xu, and Gu]{DBLP:conf/nips/ZhouXG18}
Zhou, D., Xu, P., and Gu, Q.
\newblock Stochastic nested variance reduced gradient descent for nonconvex
  optimization.
\newblock In \emph{NeurIPS}, pp.\  3925--3936, 2018.

\end{thebibliography}

\appendix
\section{Proof of Theorem~3}\label{app:thm katyusha0}
We need the following lemma for proving Theorem~3. 
\begin{lemma}{\citep{DBLP:conf/stoc/Zhu17}}\label{katyusha}
Regarding the modified-Katyusha algorithm (Algorithm~2), suppose that $\tau_1\leq\frac{1}{3\eta \Lh}$, $\tau_2 = 1/2$. Defining $D_t:=f(\y_t)-f(\x), \; \Dt^k:=f(\widetilde{\x}^k) - f(\x)$ for any $\x$,  conditioned  on iterations $\{0,\ldots, t-1\}$ in $k$-th epoch and all iterations before $k$-th epoch,  we have that
\begin{align}\label{eqn:katyusha}
0\leq& \frac{(1-\tau_1-\tau_2)}{\tau_1} D_t - \frac{1}{\tau_1}\E[ D_{t+1}] + \frac{\tau_2}{\tau_1}\Dt^k \nonumber\\
&+ \frac{1}{2\eta} \|\zeta_t-\x\|^2 - \frac{1+\eta\sigma}{2\eta}\E[\|\zeta_{t+1}-\x\|^2]
\end{align}
\end{lemma}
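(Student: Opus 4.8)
Inequality~(\ref{eqn:katyusha}) is the one-iteration ``linear coupling'' estimate underlying the analysis of Katyusha, so the plan is to reproduce the argument of~\citep{DBLP:conf/stoc/Zhu17}, checking that it survives the present parameter choices $\tau_1\le\frac{1}{3\eta\Lh}$, $\tau_2=\frac12$, $\theta=1+\eta\sigma$ --- where now the strong convexity modulus $\sigma$ is furnished by $\hat\psi$, the $\hat f_i$'s being only convex and $\Lh$-smooth. Throughout I would fix the comparison point $\x$, fix a within-epoch index $j=km+t$, write $\E$ for the conditional expectation over the single random index $i$ sampled at step $j$, and abbreviate $g:=\nabla\hat f(x_{j+1})$.

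\textbf{The three ingredients.} (i)~\emph{Variance bound.} Since $\widetilde\nabla_{j+1}=\nabla\hat f(\widetilde\x^k)+\nabla\hat f_i(x_{j+1})-\nabla\hat f_i(\widetilde\x^k)$ we have $\E[\widetilde\nabla_{j+1}]=g$, and convexity plus $\Lh$-smoothness of each $\hat f_i$ yield, after averaging over $i$,
\[
\E\big[\|\widetilde\nabla_{j+1}-g\|^2\big]\ \le\ 2\Lh\big(\hat f(\widetilde\x^k)-\hat f(x_{j+1})-\langle g,\widetilde\x^k-x_{j+1}\rangle\big).
\]
(ii)~\emph{$y$-progress.} The update $y_{j+1}$ is a gradient-type step from $x_{j+1}$ of length $\frac1{3\Lh}$; combining $\Lh$-smoothness of $\hat f$, the optimality of $y_{j+1}$, and (i), one bounds $\E[D_{j+1}]=\E[f(y_{j+1})]-f(\x)$ from above by a linear form in $\widetilde\nabla_{j+1}$ plus a fixed fraction of the gap $\hat f(\widetilde\x^k)-\hat f(x_{j+1})-\langle g,\widetilde\x^k-x_{j+1}\rangle$; the over-smoothing factor $3$ is exactly what makes that fraction small enough to absorb the variance term. (iii)~\emph{$\zeta$-progress.} The update $\zeta_{j+1}=\arg\min_\zeta\{\frac1{2\eta}\|\zeta-\zeta_j\|^2+\langle\widetilde\nabla_{j+1},\zeta\rangle+\hat\psi(\zeta)\}$ with $\sigma$-strongly convex $\hat\psi$ gives the three-point inequality
\[
\langle\widetilde\nabla_{j+1},\zeta_{j+1}-\x\rangle+\hat\psi(\zeta_{j+1})-\hat\psi(\x)\ \le\ \tfrac1{2\eta}\|\zeta_j-\x\|^2-\tfrac{1+\eta\sigma}{2\eta}\|\zeta_{j+1}-\x\|^2-\tfrac1{2\eta}\|\zeta_{j+1}-\zeta_j\|^2,
\]
which is precisely where the factor $\theta=1+\eta\sigma$ in front of $\|\zeta_{j+1}-\x\|^2$ enters.

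\textbf{Combination.} I would split $\zeta_{j+1}-\x=(\zeta_{j+1}-\zeta_j)+(\zeta_j-\x)$ in (iii) and invoke the coupling $x_{j+1}=\tau_1\zeta_j+\tau_2\widetilde\x^k+(1-\tau_1-\tau_2)y_j$, which rearranges to $\tau_1(x_{j+1}-\zeta_j)=\tau_2(\widetilde\x^k-x_{j+1})+(1-\tau_1-\tau_2)(y_j-x_{j+1})$; taking $\E$ and using $\E[\widetilde\nabla_{j+1}]=g$ converts $\langle\widetilde\nabla_{j+1},\zeta_j-\x\rangle$ into $\tfrac1{\tau_1}\langle g,x_{j+1}-\x\rangle$ minus the matching multiples of $\langle g,x_{j+1}-\widetilde\x^k\rangle$ and $\langle g,x_{j+1}-y_j\rangle$. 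Then I would apply convexity of $\hat f$ at $x_{j+1}$ against $\x$ (producing $\hat f(\x)-\hat f(x_{j+1})$), against $y_j$ (producing $D_t$), and against $\widetilde\x^k$ (producing $\Dt^k$ and cancelling the residual gap from (ii)); together with the $\hat\psi$ values from (ii) and (iii) this assembles into the $\tfrac1{\tau_1}$-scaled combination of $f$-values appearing in~(\ref{eqn:katyusha}). Finally, plugging in $\tau_2=\frac12$ and $\eta=\frac1{3\tau_1\Lh}$ (so $\tau_1=\frac1{3\eta\Lh}$), the leftover $\|g\|^2$ term cancels against the $y$-progress and the displacement $-\tfrac1{2\eta}\|\zeta_{j+1}-\zeta_j\|^2$ is likewise absorbed, leaving exactly~(\ref{eqn:katyusha}).

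\textbf{Main obstacle.} The hardest part will be the exact bookkeeping in the combination step: calibrating the $\tfrac{3\Lh}{2}$ weight of the $y$-update against the $\tfrac1{2\eta}$ weight of the $\zeta$-update through $\tau_1=\tfrac1{3\eta\Lh}$ so that the gradient-norm and $\|\zeta_{j+1}-\zeta_j\|^2$ terms vanish, and verifying that the variance term from (i) is fully consumed by the slack coming from the over-smoothing factor $3$. A secondary technicality is keeping $\hat\psi$ consistent: the $\zeta$-step prox-es $\hat\psi$, whereas $D_t=f(y_t)-f(\x)$ evaluates $\hat\psi$ at $y_t$, so one needs either the $y$-step to be the proximal step with respect to $\hat\psi$ as well, or the identity that $y_{j+1}-x_{j+1}$ is a fixed multiple of $\zeta_{j+1}-\zeta_j$, which lets one bound $\hat\psi(y_{j+1})$ by a convex combination of $\hat\psi(\zeta_{j+1})$, $\hat\psi(\widetilde\x^k)$ and $\hat\psi(y_j)$. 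Everything else is standard Katyusha algebra.
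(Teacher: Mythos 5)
Your plan is essentially the route the paper itself relies on: the paper gives no independent proof of this lemma but imports it verbatim from Allen-Zhu's Katyusha analysis, and your three ingredients (variance bound, over-smoothed $y$-step, strongly convex mirror step on $\zeta$) combined through the coupling $x_{j+1}=\tau_1\zeta_j+\tau_2\widetilde{x}^k+(1-\tau_1-\tau_2)y_j$, with $\tau_2=\tfrac12$ absorbing the variance and $\tau_1\le\tfrac{1}{3\eta\hat L}$ balancing the $\tfrac{3\hat L}{2}$ and $\tfrac{1}{2\eta}$ quadratics, is exactly that argument. The only substantive point is the ``secondary technicality'' you flag, and it is real: as printed, line 10 of Algorithm~2 omits $\hat\psi$ from the $y$-update, so $\hat\psi(y_{j+1})$ inside $D_{t+1}$ is uncontrolled, and the lemma goes through only with Katyusha's Option I ($\hat\psi$-proximal $y$-step) or Option II ($y_{j+1}=x_{j+1}+\tau_1(\zeta_{j+1}-\zeta_j)$) --- note that the ``fixed multiple of $\zeta_{j+1}-\zeta_j$'' identity you propose is precisely Option II and does not hold for the printed update once $\hat\psi\neq 0$, so one of these two modifications (as in the cited source) must be taken as the intended definition of the $y$-step.
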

\begin{proof}{[of Theorem~3]}
Define $\theta = 1+\eta\sigma$ and multiply (\ref{eqn:katyusha}) by $\theta^{t}$ on both side. By summing up the inequalities in~(\ref{eqn:katyusha})  in the $k$-th epoch, we have that
\begin{align*}
0\leq &\E_{k}\bigg[\frac{1-\tau_1-\tau_2}{\tau_1}\sum_{t=0}^{m-1}D_{km+t}\theta^t-\frac{1}{\tau_1}\sum_{t=0}^{m-1}D_{km+t+1}\theta^t\bigg] \\
&+ \frac{\tau_2}{\tau_1}\Dt^k\sum_{t=1}^{m-1}\theta^t+\frac{1}{2\eta}\|\zeta_{km}-\x\|^2\\
&-\frac{\theta^m}{2\eta}\E_{k+1}[\|\zeta_{(k+1)m}-\x\|^2]
\end{align*}
where $\E_{k}[\cdot]$ denotes expectation in $k$-th epoch conditional on $0,\ldots, k-1$ epochs. Using the convexity of $f(\cdot)$, we have that 
\begin{align}\label{eqn:katyusha 1}
\nonumber &\frac{\tau_1+\tau_2-1+1/\theta}{\tau_1}\theta\E_{k}[\Dt^{k+1}]\sum_{t=0}^{m-1}\theta^t+\\
\nonumber & \;  \frac{1-\tau_1-\tau_2}{\tau_1}\theta^m\E[D_{(k+1)m}]+ \frac{\theta^m}{2\eta} \E_k[\|\zeta_{(k+1)m}-\x\|^2]\\
&\leq \frac{\tau_2}{\tau_1}\Dt^k \sum_{t=0}^{m-1}\theta^t + \frac{1-\tau_1-\tau_2}{\tau_1}D_{km} + \frac{1}{2\eta}\|\zeta_{km}-\x\|^2
\end{align}
Substituting $\tau_2 = 1/2$ and $m\leq \lceil\frac{\log(2\tau_1+2/\theta - 1)}{\log\theta}\rceil + 1$,   we have that 

\begin{align*}
    \nonumber &\theta^m\frac{1}{2\theta\tau_1}\E_{k}[\Dt^{k+1}]\sum_{t=0}^{m-1}\theta^t + \frac{1/2-\tau_1}{\tau_1}\theta^m \E[D_{(k+1)m}]\\
    &\; + \frac{\theta^m}{2\eta} \E_k[\|\zeta_{(k+1)m}-\x\|^2]\\
    &\leq \frac{1}{2\tau_1}\Dt^{k} \sum_{t=0}^{m-1}\theta^t + \frac{1/2-\tau_1}{\tau_1}D_{km} + \frac{1}{2\eta}\|\zeta_{km}-\x\|^2
\end{align*}
Telescoping above inequality over all epochs $k = 0,\ldots, K-1$ we have that
\begin{align*}
\E[\Dt^K] \leq 2\theta\tau_1\theta^{-mK}&\left( \frac{1}{2\tau_1}\Dt^0 + \frac{1/2-\tau_1}{\tau_1\sum_{t=0}^{m-1}\theta^t}D_0 \right.\\
&\;\left.+ \frac{1}{2\eta\sum_{t=0}^{m-1}\theta^t}\|\zeta_0-\x\|^2\right)
\end{align*}
Since $\sum_{t=0}^{m-1}\theta^t\geq 1$, $\tau_1\leq \frac{1}{2}$ and $\theta \leq 2$, we have 
\begin{align*}
    \E[\Dt^K] &\leq 4\tau_1\theta^{-mK}(\frac{1-\tau_1}{\tau_1}\Dt^0 + \frac{1}{2\eta}\|\zeta_0-\x\|^2)
\end{align*}
We can use the same analysis by plugging $\x= \zeta_0$ in~(\ref{eqn:katyusha}) to  prove that $\E[f(\widetilde\x^{K}) - f(\widetilde\x^0)]\leq 0$ - an objective value decreasing property that will be used later. 
\end{proof}

\section{Proof of Lemma~\ref{lem:1}}\label{app:lem1}
\begin{proof}
First we have hat
\begin{align*}
    \E[f_s(\x_{s})] =& \E\bigg[\phi(\x_s)) + \frac{1}{2\gamma} \|\x_s-\x_{s-1}\|^2\bigg] \leq f_s(\z_s) + \mathcal{E}_s \\
    \leq& f_s(\x_{s-1}) + \mathcal{E}_s = \phi(\x_{s-1}) + \mathcal{E}_s
\end{align*}
Besides, we also have that
\begin{align*}
    &\|\x_{s}-\x_{s-1}\|^2 \\
    =& \|\x_{s}-\z_{s}+\z_{s}-\x_{s-1}\|^2\\
    =& \|\x_{s}-\z_{s}\|^2+\|\z_{s}-\x_{s-1}\|^2 + 2\langle \x_{s}-\z_{s}, \z_{s}- \x_{s-1}\rangle\\
    \geq& (1-\alpha_{s}^{-1})\|\x_{s}-\z_{s}\|^2 + (1-\alpha_{s})\|\x_{s-1}-\z_{s}\|^2
\end{align*}
where the inequality follows from the Young's inequality with $0<\alpha_{s}<1$. Combining above inequalities, then we have
\begin{align*}
&\frac{(1-\alpha_s)}{2\gamma}\E_{s}\|\x_{s-1}-\z_s\|^2 \\
\leq& \E_s\bigg[ \Delta_s+\frac{(\alpha_{s}^{-1}-1)}{2\gamma}\|\x_{s}-\z_{s}\|^2 +\mathcal{E}_{s} \bigg]\\
\leq &\E_s[\Delta_s] + \frac{(\alpha_s^{-1}-1)}{2\gamma}\E_s[\|\x_s-\z_s\|^2] +\mathcal{E}_s\\
\leq& \E_{s}[\Delta_s] + \frac{(\alpha_s^{-1}-1)+\gamma\sigma}{\gamma\sigma}\mathcal{E}_s\\
\leq& \E_{s}[\Delta_s]  + \frac{(\alpha_s^{-1}-1)+\gamma\sigma}{\gamma\sigma}\left[4\theta^{-mK}(\phi(\x_{s-1}) - \phi(\x_*)) \right.\\
&\left. + 2\theta^{-mK}\hat L\|\x_{s-1} - \z_s\|^2 \right]
\end{align*}
where the first inequality follows from the definition $\Delta_s \coloneqq \phi(\x_{s-1}) - \phi(\x_s)$, and the third inequality uses the strong convexity of $f_s(\x)$, whose strong convexity parameter is $\sigma = \gamma^{-1} - \mu$.
Substituting $\alpha_s = 1/2$, $\gamma = 1/(2\mu)$, and $\sigma = \mu$, $\Lh\leq 2L$ and  $\theta^{-mK}\leq \mu/(24\hat L)$,  we have that
\begin{align*}
\frac{1}{8\gamma}\|\x_{s-1}-\z_s\|^2\leq \E_{s}[\Delta_s] + 12\theta^{-mK}(\phi(\x_{s-1}) - \phi(\x_*))
\end{align*} 
\end{proof}

\section{A Technical Lemma}\label{app:lem2}
\begin{lemma}\label{lem:2}
For a non-decreasing sequence $w_s, s=0, \ldots, S+1$, we have 
\[
\E\left[\sum_{s=1}^{S+1} w_s\Delta_s\right]\leq  \Delta_\phi w_{S+1}
\]
\end{lemma}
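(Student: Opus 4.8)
\textbf{Proof proposal for Lemma~\ref{lem:2}.}

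The plan is to exploit the objective-decreasing property established at the end of the proof of Theorem~3, namely that each call of Katyusha satisfies $\E[f_s(\x_s) - f_s(\x_{s-1})]\leq 0$, which when combined with the definition $f_s(\cdot) = \phi(\cdot) + \frac{1}{2\gamma}\|\cdot - \x_{s-1}\|^2$ gives $\E[\phi(\x_s) + \frac{1}{2\gamma}\|\x_s - \x_{s-1}\|^2 - \phi(\x_{s-1})]\leq 0$, hence in particular $\E[\Delta_s]=\E[\phi(\x_{s-1}) - \phi(\x_s)]\geq \frac{1}{2\gamma}\E[\|\x_s-\x_{s-1}\|^2]\geq 0$. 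So the $\Delta_s$'s are nonnegative in expectation, and their running sum telescopes: $\sum_{s=1}^{S+1}\Delta_s = \phi(\x_0) - \phi(\x_{S+1})\leq \phi(\x_0) - \min_{\x}\phi(\x)\leq \Delta_\phi$ by Assumption~\ref{a1}(iii).

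The remaining ingredient is an Abel summation / summation-by-parts argument to handle the weights $w_s$. Since $\{w_s\}$ is non-decreasing and positive, I would write $w_s = w_{S+1} - \sum_{j=s}^{S}(w_{j+1}-w_j)$ and note $w_{j+1}-w_j\geq 0$. Then
\begin{align*}
\sum_{s=1}^{S+1}w_s\Delta_s &= w_{S+1}\sum_{s=1}^{S+1}\Delta_s - \sum_{s=1}^{S+1}\Delta_s\sum_{j=s}^{S}(w_{j+1}-w_j)\\
&= w_{S+1}\sum_{s=1}^{S+1}\Delta_s - \sum_{j=1}^{S}(w_{j+1}-w_j)\sum_{s=1}^{j}\Delta_s.
\end{align*}
Taking expectations, the first term is at most $w_{S+1}\Delta_\phi$ by the telescoping bound above, and each inner partial sum satisfies $\E[\sum_{s=1}^{j}\Delta_s] = \E[\phi(\x_0)-\phi(\x_j)]\geq 0$ (again by the objective-decreasing property applied stage by stage), so the subtracted term is nonnegative since each $w_{j+1}-w_j\geq 0$. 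Therefore $\E[\sum_{s=1}^{S+1}w_s\Delta_s]\leq w_{S+1}\Delta_\phi$.

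The only mild subtlety — and the step I would be most careful about — is the interchange of expectation and the (finite) sums, and making sure the stagewise inequality $\E[\phi(\x_j)]\leq \E[\phi(\x_{j-1})]$ holds unconditionally and not merely conditionally on the previous stage; this follows by taking iterated expectations over the conditional bound $\E_s[f_s(\x_s)]\leq f_s(\x_{s-1}) = \phi(\x_{s-1})$ together with $f_s(\x_s)\geq \phi(\x_s)$. Once that is in place the argument is purely combinatorial and the result follows. (An essentially identical inequality is used in~\citep{chen18stagewise}, so I would cite that for the Abel-summation bookkeeping if a shorter exposition is desired.)
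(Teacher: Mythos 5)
Your proof is correct and takes essentially the same route as the paper's: both are Abel-summation (summation-by-parts) arguments combining the monotonicity of $w_s$, the Katyusha objective-decreasing property $\E[\phi(\x_s)]\leq \E[\phi(\x_{s-1})]$ (obtained via $f_s(\x_s)\geq \phi(\x_s)$ and $f_s(\x_{s-1})=\phi(\x_{s-1})$), and Assumption~\ref{a1}(iii). The only cosmetic difference is that you isolate the telescoped term $w_{S+1}\sum_{s}\Delta_s$ and show the remaining correction is nonnegative in expectation, whereas the paper rewrites the sum as $\sum_{s=1}^{S+1}(w_s-w_{s-1})\big(\phi(\x_{s-1})-\phi(\x_{S+1})\big)$ with $w_0=0$ and bounds each expected term by $\Delta_\phi$; the two rearrangements are equivalent.
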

\begin{proof}
\begin{align*}
&\sum_{s=1}^{S+1} w_s\Delta_s = \sum_{s=1}^{S+1}w_s (\phi(\x_{s-1}) - \phi(\x_s)) \\
=& \sum_{s=1}^{S+1} (w_{s-1}\phi(\x_{s-1}) - w_s\phi(\x_s)) \\
&+ \sum_{s=1}^{S+1}(w_s - w_{s-1})\phi(\x_{s-1})\\
= &w_0 \phi(\x_0) - w_{S+1}\phi(\x_{S+1}) +\sum_{s=1}^{S+1}(w_s - w_{s-1})\phi(\x_{s-1})\\
=& \sum_{s=1}^{S+1}(w_s - w_{s-1})(\phi(\x_{s-1}) - \phi(\x_{S+1}))
\end{align*}
where the third equality follows from the extension that $w_0=0$. Taking expectation on both sides, we have
\begin{align*}
&\E\left[\sum_{s=1}^{S+1} w_s\Delta_s \right]\\
 =&\sum_{s=1}^{S+1}(w_s - w_{s-1})\E[(\phi(\x_{s-1}) - \phi(\x_{S+1}))]\\
\leq& \sum_{s=1}^{S+1}(w_s - w_{s-1})[\phi(\x_0) - \phi(\x_*)] \\
\leq& \Delta_\phi w_{S+1}
\end{align*}
where we use the fact that $\E[f_s(\x_s) -  f_s(\x_{s-1})]\leq 0$ (this is the objective value decreasing property of Katyusha) implying $\E[\phi(\x_s)  - \phi(\x_{s-1})]\leq 0$ and hence $\E[\phi(\x_{s})]\leq \phi(\x_0)$ for $s\geq 0$. 
\end{proof}

\section{Decomposition of LSP and TL1 }\label{app:dec}
It is easy to verify that for LSP, $r_1(\x) = \frac{\lambda}{\beta} \|\x\|_1$ and $r_2(x) = \lambda\sum_{i=1}^d(|x|/\beta- \log(\beta + |x|))$. For TL1, $r_1(\x) = \lambda \frac{\beta +1}{\beta}\|\x\|_1$ and $r_2(\x)= \lambda\sum_{i=1}^d\frac{(\beta+1)|x_i|^2}{\beta(\beta+|x_i|)}$. For smoothness of $r_2$ for both regularizers, we refer readers to~\cite{Wen2018}. 
\end{document}